\documentclass[a4paper, 12pt,oneside,reqno]{amsart}

\usepackage[a4paper]{geometry}
\usepackage{cmap}
\usepackage[T2A]{fontenc}
\usepackage[english]{babel}
\usepackage[utf8]{inputenc}
\usepackage{xcolor}

\usepackage{amsmath, amssymb,  amsthm, amscd, hyperref}

\def\Lalg{L}

\def\Var{\mathrm{Var}}
\def\Lie{\mathrm{Lie}}
\def\Com{\mathrm{Com}}
\def\Pois{\mathrm{Pois}}
\def\Perm{\mathrm{Perm}}
\def\LSym{\mathrm{LSym}}
\def\As{\mathrm{As}}

\def\End{\mathop{\fam 0 End}\nolimits}
\def\Hom{\mathop{\fam 0 Hom}\nolimits}

\def\ChEnd{\mathop{\fam 0 ChEnd}\nolimits}
\def\oo#1{\mathbin{ {}_{#1}}}

\title{Chiral algebras with abelian conformal part}

\newtheorem{proposition}{Proposition}
\newtheorem{theorem}{Theorem}

\newtheorem{remark}{Remark}

\theoremstyle{definition}

\newtheorem{definition}{Definition}
\newtheorem{example}{Example}

\author{I. V. DUDIN and P. S. KOLESNIKOV$^*$}

\thanks{$^*$Partially supported by Russian Science Foundation, project 25-41-00005}

\begin{document}

\begin{abstract}

We study a categorical approach to the concept of varieties of chiral algebras.
We prove that the class of chiral algebras in the variety defined by a binary quadratic operad $\Var$, whose conformal structure is abelian, coincides with the class of differential algebras in the variety defined by the Manin black product of the operads $\Var$ and $\Com$, where
$\Com $ is the operad of associative commutative algebras.

\end{abstract}

\maketitle

\section*{Introduction}

The notion of a vertex algebra (vertex operator algebra) emerged as a tool to describe the properties of coefficients in the operator product expansion (OPE) in two-dimensional conformal field theory \cite{BPZ}. In addition, vertex algebras have found applications in representation theory \cite{Borch} (see also \cite{FLM-MonsterBook}). The classical definition of a vertex algebra in terms of formal series formed by linear operators on a vector space equipped with a translation operator does not clearly reflect the underlying algebraic nature of this concept. The categorical approach to vertex and conformal algebras proposed in \cite{BD-Chiral}, although technically more involved, makes it possible to treat ordinary Lie algebras over a field, conformal Lie algebras, and vertex algebras within a unified framework, namely as morphisms from the operad $\Lie$ to the corresponding endomorphism operad (ordinary linear, conformal, or chiral).

The theory of  Lie conformal algebras \cite{Kac_VA_Beginn} has provided a source of problems and methods for numerous studies of conformal algebras in other varieties (associative, Jordan, left-symmetric, Novikov, etc.). The categorical definition of a conformal algebra as a special case of a pseudoalgebra \cite{BDK-2001} serves in these investigations as a key tool for the proper formulation of problems and the choice of methods for their solution.

In \cite{BDKH-2019}, various equivalent approaches to the definition of a vertex algebra are studied in detail. In particular, the notion of the operad of chiral endomorphisms $\ChEnd_V$ of a vector space $V$ equipped with a translation operator $\partial$ is presented there in a purely algebraic form. In Section~1, we provide a more detailed exposition of this construction, which is necessary for subsequent computations. The structure of a  Lie chiral algebra on the space $V$ is defined by a morphism of operads
\[
\Lie \to \ChEnd_V.
\]
As shown in \cite{BDKH-2019}, the difference between a  Lie chiral algebra and a vertex algebra in the usual sense (see, for example, \cite{F-BZvi}) is only that a  Lie chiral algebra contains no information about the vacuum vector.

If the operad $\Lie$ is replaced by the operad $\As$ corresponding to the variety of associative algebras, the resulting class of objects (associative chiral algebras) turns out to be ``degenerate'' in a certain sense: any algebraic expression in an associative chiral algebra containing more than one chiral multiplication (an operation analogous to the normally ordered product in vertex algebras) can be expressed in terms of the conformal part of the algebra.

The aim of this work is, in particular, to establish a criterion that allows one to immediately determine which binary quadratic operads $\Var$ yield a degenerate class of $\Var$-chiral algebras.
To that end, we consider the variety of ordinary algebras that are $\Var$-chiral algebras with an abelian conformal part.

It turns out that for any binary quadratic operad $\Var$, the class of all $\Var$-chiral algebras with an abelian conformal part coincides with the class of differential algebras in the variety defined by the operad $\Var \bullet \Com$, where $\bullet$ denotes the Manin black product of operads \cite{GK-94}.

\section{The operad of chiral endomorphisms}

In this section, we present the construction of the operad of chiral endomorphisms (chiral operad) of a given module $V$ equipped with a linear operator $\partial$, and provide the necessary examples following \cite{BDKH-2019}. To do this, it is necessary to define a certain family of representations $\mathcal O_n$ of the algebras of differential operators $\mathcal D_n$ for $n \ge 1$.

Let $\Bbbk$ be a field of characteristic zero, and let $z_1,\dots, z_n$ be formal variables. For brevity, we introduce the following notation: $z_{ij}=z_i-z_j$; here and henceforth, for indices $i,j \in \mathbb{N}$, we assume $1 \leq j < i \leq n$, where $n \in \mathbb{N}$. Consider the algebra of Laurent polynomials in $z_{ij}$
\[
  \mathcal{O}_n = \Bbbk [ z_{ij}, z_{ij}^{-1}]
\]
as a subalgebra of the field of rational functions $\Bbbk (z_1,\dots, z_n)$.

Let $\mathcal D_n$ denote the following algebra of differential operators:
\[
\mathcal D_n = \Bbbk [ z_{ij}] [\partial_{z_1}, \dots , \partial_{z_n}].
\]
In particular, $\mathcal O_1 = \Bbbk $ and $\mathcal D_1 = \Bbbk [\partial_{z_1}]$.

By construction, $\mathcal{D}_n$ is a subalgebra of the $n$th Weyl algebra and, therefore, acts on $\Bbbk (z_1,\dots, z_n)$ as a left module. It is easy to see that $\mathcal{O}_n$ is a $\mathcal{D}_n$-submodule of $\Bbbk (z_1,\dots, z_n)$. It is known (see~\cite[Lemma~6.4]{BDKH-2019}) that $\mathcal{O}_n$, as a $\mathcal{D}_n$-module, is generated by a single element
\[
\omega_{n} = \prod\limits_{1 \leq j < i \leq n} z_{ij}^{-1}.
\]
In particular, $\omega_1 = 1$.

Let $V$ be a vector space equipped with a linear operator
$\partial : V \to V$. On the space $V^{\otimes n} \otimes \mathcal{O}_n$, we define a right $\mathcal{D}_n$-module structure via the following actions:
\begin{gather}
  (v_1 \otimes \cdots \otimes v_n \otimes f)z_{ij} = v_1 \otimes \cdots \otimes v_n \otimes z_{ij}f, \label{eq:Dn-action-left1} \\
  (v_1 \otimes \cdots \otimes v_n \otimes f)\partial_{z_k} = v_1 \otimes \cdots \otimes (\partial v_k) \otimes \cdots \otimes v_n \otimes f - v_1 \otimes \cdots \otimes v_n \otimes \frac{\partial f}{\partial z_k}. \label{eq:Dn-action-left2}
\end{gather}

Let $H$ denote the polynomial algebra $\Bbbk[\partial]$ in the formal variable $\partial$. The space $V$ considered above can be viewed as a left $H$-module, where the action of the element $\partial \in H$ is given by the corresponding operator.

Consider the polynomial algebra $\Lalg_n = \Bbbk[\lambda_1, \dots, \lambda_n]$, $n \ge 1$, in another set of formal variables. Define a right action of $H$ on the algebra $\Lalg_n$ by the rule
\[
    h(\lambda_1 , \dots , \lambda_n) \partial = -(\lambda_1 + \cdots + \lambda_n) h(\lambda_1 , \dots , \lambda_n) ,\quad h \in \Lalg_n.
\]

We also define a right action of $\mathcal{D}_n$ on $\Lalg_n$ as follows:
\[
    h z_{ij} = \left( \frac{\partial}{\partial \lambda_j} - \frac{\partial}{\partial \lambda_i} \right) h,
    \quad
    h \partial_{z_k} = -\lambda_k h,
\]
for $h \in \Lalg_n$, $1 \le j < i \le n$, $1 \le k \le n$.
Note that this action is $\partial$-invariant; therefore, the right $\mathcal{D}_n$-module structure on the space $\Lalg_n \otimes_H V$ is well-defined by the action on the first tensor factor.

Now, using the constructions introduced above, we define a family of vector spaces of {\em chiral endomorphisms} of the $H$-module $V$ as
\[
    \ChEnd_V(n) = \Hom_{\mathcal{D}_n}(V^{\otimes n} \otimes \mathcal{O}_n, \Lalg_n \otimes_H V), \quad n \geq 1.
\]

To write an element $X \in \ChEnd_V(n)$, we will explicitly indicate the sets of variables $z_1,\dots, z_n$, on which the module $\mathcal{O}_n$ is constructed, and $\lambda_1,\dots, \lambda_n$, on which the module $\Lalg_n$ is constructed:
\[
X: v_1 \otimes \dots \otimes v_n \otimes f \mapsto X_{\lambda_1,\dots,\lambda_n}^{z_1,\dots, z_n}(v_1,\dots, v_n; f),
\]
where $v_i \in V$ and $f \in \mathcal{O}_n$.

Since $\mathcal{O}_n$ is a cyclic $\mathcal{D}_n$-module with the generator $\omega_n$, a function $X \in \ChEnd_V(n)$ is completely determined by its values on the arguments $(v_1,\dots,v_n; \omega_n)$, where $v_i \in V$.
Indeed, for $n=2$, due to \eqref{eq:Dn-action-left1}, we have
\[
X_{\lambda_1,\lambda_2}^{z_1,z_2}(u,v; z_{ij}f)
= X_{\lambda_1,\lambda_2}^{z_1,z_2}(u,v; f)z_{ij}
= \left(\dfrac{\partial}{\partial \lambda_j}
 - \dfrac{\partial}{\partial \lambda_i}\right)
X_{\lambda_1,\lambda_2}^{z_1,z_2}(u,v; f)
\]
for $u,v \in V$ and $f \in \mathcal{O}_2$.
Furthermore,
\[
(u \otimes v \otimes \partial_{z_2}f)
= (u \otimes \partial v \otimes f) - (u \otimes v \otimes f)\partial_{z_2}
\]
due to \eqref{eq:Dn-action-left2}; therefore,
\[
X_{\lambda_1,\lambda_2}^{z_1,z_2}(u,v; \partial_{z_2}f)
= X_{\lambda_1,\lambda_2}^{z_1,z_2}(u,\partial v; f) + \lambda_2 X_{\lambda_1,\lambda_2}^{z_1,z_2}(u, v; f)
= X_{\lambda_1,\lambda_2}^{z_1,z_2}(u,(\partial+\lambda_2) v; f).
\]
Similarly,
\[
X_{\lambda_1,\lambda_2}^{z_1,z_2}(u,v; \partial_{z_1}f)
= X_{\lambda_1,\lambda_2}^{z_1,z_2}((\partial+\lambda_1)u, v; f).
\]
Thus, the value of a chiral endomorphism $X \in \ChEnd_V(2)$ on an arbitrary $f \in \mathcal{O}_2$ is expressed in terms of its value on the generator $\omega_2 \in \mathcal{O}_2$.

In the general case, the condition of $\mathcal{D}_n$-linearity of a chiral endomorphism $X$ is expressed by the relations of {\em sesquilinearity}:
\begin{equation} \label{eq:ChEnd-3/2-lin}
\begin{gathered}
X_{\lambda_1, \dots , \lambda_n}^{z_1, \dots , z_n} (v_1, \dots , v_n; \partial_{z_i} f)
=
X_{\lambda_1, \dots , \lambda_n}^{z_1, \dots , z_n} (v_1, \dots , (\partial + \lambda_i)v_i, \dots , v_n; f) ,
\\
X_{\lambda_1, \dots , \lambda_n}^{z_1, \dots , z_n} (v_1, \dots , v_n; z_{ij} f)=
\left(\frac{\partial}{\partial \lambda_j}-\frac{\partial}{\partial \lambda_i}\right )
X_{\lambda_1, \dots , \lambda_n}^{z_1, \dots , z_n} (v_1, \dots , v_n; f).
\end{gathered}
\end{equation}
Therefore, for any $n \ge 1$, it is sufficient to evaluate  a chiral endomorphism on the last argument $\omega_n \in \mathcal{O}_n$.

For $n=1$, the space $\Lalg_1 \otimes_H V$ is isomorphic to $V$, and $\omega_1 = 1$; therefore,
\[
X^{z_1}_{\lambda_1}(u; 1) = X(u) \in V
\]
for any $u \in V$. In this case, the $\mathcal{D}_1$-linearity property \eqref{eq:ChEnd-3/2-lin} implies that
\[
0 = X^{z_1}_{\lambda_1}(u; \partial_{z_1} 1) = X(\partial u) + \lambda_1 X(u) = X(\partial u) - \partial X(u),
\]
i.e., $\ChEnd_V(1) = \End_H(V)$.

In particular, we will denote by $I$ the identity operator on $V$, considered as an element of $\ChEnd_V(1)$.

Let us also consider the important case $n=2$. Any $X \in \ChEnd_V(2)$ is determined by a collection of bilinear maps $\alpha_j: V \otimes V \to V$, $j \ge -1$:
\[
X_{\lambda_1,\lambda_2}^{z_1, z_2} (u,v;\omega_2) = \alpha_{-1}(u,v) + \alpha_0(u,v) \lambda_1 + \alpha_1(u,v) \lambda_1^{2} + \cdots ,
\]
where the sum contains finitely many terms. It is convenient to write this sum as
\begin{equation}\label{eq:ChEnd(2)presentation}
X_{\lambda_1,\lambda_2}^{z_1, z_2} (u,v;\omega_2) = u \cdot_X v + \int\limits_{0}^{\lambda_1} (u \oo\sigma v)_{X}\, d\sigma,
\end{equation}
where $u \cdot_X v = \alpha_{-1}(u,v)$, and the remaining terms constitute a formal antiderivative of the polynomial $(u \oo\sigma v )_X \in \Bbbk [\sigma] \otimes V$, which, in turn, can be written as
\begin{equation}\label{eq:n-prod-lambda}
    (u \oo\sigma v )_X = \sum\limits_{n \ge 0} \dfrac{\sigma^n}{n!} (u \oo{(n)} v)_X, \quad (u \oo{(n)} v)_X = (n+1)! \alpha_n(u,v).
\end{equation}
The bilinear maps $(\cdot \oo{(n)} \cdot)_X$, $n \ge 0$, form a countable family of ordinary binary algebraic operations on $V$. It is also convenient to set $u \cdot_X v = (u \oo{(-1)} v)_X$.

The second equality in \eqref{eq:ChEnd-3/2-lin} implies that
\begin{equation}\label{eq:ConfBracket-1}
(u \oo{\lambda_1} v)_X = X^{z_1,z_2}_{\lambda_1,\lambda_2} (u,v; 1),
\end{equation}
since $1 = z_{21}\omega_2$ and the derivative with respect to $\lambda_2$ in the representation \eqref{eq:ChEnd(2)presentation} is zero. Then, from the first equality in \eqref{eq:ChEnd-3/2-lin} with $f=1$, it follows that
\[
((\partial + \lambda_1)u \oo{\lambda_1} v)_X = 0,
\]
i.e., the bracket $(\cdot \oo\sigma \cdot)_X$ satisfies the conformal sesquilinearity identity according to \cite{Kac_VA_Beginn}.

Note that the $\mathcal{D}_n$-linearity of the function $X$ implies that $\partial$ is a derivation with respect to the multiplication $\cdot_X$ and the bracket $(\cdot \oo\sigma \cdot)_X$. Indeed, the function $\omega_2$ obviously satisfies the equality
\[
\partial_{z_1}\omega_2 = -\partial_{z_2}\omega_2 = -\dfrac{1}{(z_2-z_1)^2}.
\]
Therefore,
\[
X_{\lambda_1,\lambda_2}^{z_1,z_2}((\partial+\lambda_1)u,v; \omega_2)
=
X_{\lambda_1,\lambda_2}^{z_1,z_2}(u,v; \partial_{z_1}\omega_2)
=
- X_{\lambda_1,\lambda_2}^{z_1,z_2}(u,v; \partial_{z_2}\omega_2)
=
-X_{\lambda_1,\lambda_2}^{z_1,z_2}(u,(\partial+\lambda_2)v; \omega_2).
\]
The left-hand side of this equality can be represented in the form \eqref{eq:ChEnd(2)presentation} as:
\begin{equation}\label{eq:Der3/2-left}
\lambda_1 (u \cdot_X v) + (\partial u \cdot_X v)
+\lambda_1 \int\limits_0^{\lambda_1} (u \oo\sigma v)_X d\sigma
+ \int\limits_0^{\lambda_1} (\partial u \oo\sigma v)_X d\sigma.
\end{equation}
The right-hand side can be written as
\begin{multline}\label{eq:Der3/2-right}
-\lambda_2 X_{\lambda_1,\lambda_2}^{z_1,z_2}(u,v;\omega_2)
- X_{\lambda_1,\lambda_2}^{z_1,z_2}(u,\partial v;\omega_2)
=
(\partial+\lambda_1)X_{\lambda_1,\lambda_2}^{z_1,z_2}(u,v;\omega_2)
- X_{\lambda_1,\lambda_2}^{z_1,z_2}(u,\partial v;\omega_2)
\\
=
(\partial+\lambda_1)(u \cdot_X v) +
(\partial+\lambda_1)\int\limits_0^{\lambda_1} (u \oo\sigma v)_X d\sigma
-(u \cdot_X \partial v)
-\int\limits_0^{\lambda_1} (u \oo\sigma \partial v)_X d\sigma.
\end{multline}
Comparing \eqref{eq:Der3/2-left} and \eqref{eq:Der3/2-right} at $\lambda_1=0$, we obtain
\[
(\partial u \cdot_X v) = \partial(u \cdot_X v) - (u \cdot_X \partial v),
\]
while the remaining terms yield the equality
\[
(\partial u \oo\sigma v)_X
=
\partial (u \oo\sigma v)_X - (u \oo\sigma \partial v)_X.
\]

Similarly to "ordinary" multilinear maps on a vector space, the chiral endomorphisms of an $H$-module $V$ form compositions. Specifically, for $X \in \ChEnd_V(n)$ and $Y_i \in \ChEnd_V(m_i)$, where $i=1,\dots, n$ and $m_1, \dots, m_n \geq 1$, we define their composition
\[
Z = X \circ (Y_1, \dots, Y_n) \in \ChEnd_V(m_1 + \dots + m_n)
\]
as follows. Let
$M_i = \sum\limits_{j=1}^{i} m_j$ and $\Lambda_i = \sum\limits_{j=M_{i-1}+1}^{M_i} \lambda_j$.
In particular, $M_0=0$ and $M=M_n=m_1+\dots+m_n$.

Since the ring $\Bbbk [z_1,\dots, z_M]$ is a unique factorization domain, the numerator of any fraction $f \in \mathcal{O}_M$ can be uniquely factored into $p_1 \dots p_n q$, where $p_i$ contains all irreducible factors of the numerator that depend only on $z_{lk}$ for $M_{i-1}+1 \le k < l \le M_i$, and the polynomial $q$ contains all "mixed" factors. By grouping all divisors in the denominator accordingly, one can represent $f$ in the form
\begin{equation}\label{eq:split-O}
f = g \cdot \prod_{i=1}^n f_i,
\end{equation}
where $f_i = f_i(z_{M_{i-1}+1}, \dots, z_{M_i})$ and $g \in \mathcal{O}_M$ has no poles at the points $z_{lk}$ for any $M_{i-1}+1 \le k < l \le M_i$.

Suppose $v_1,\dots, v_M \in V$ and
\[
(Y_i)_{\lambda_1,\dots, \lambda_{m_i}}^{z_1,\dots, z_{m_i}}(v_{M_{i-1}+1},\dots, v_{M_i}; f_i(z_1,\dots, z_{m_i}))
=\sum\limits_j F_{ij}(\lambda_1,\dots , \lambda_{m_i})\otimes_H w_{ij} \in \Lalg_{m_i}\otimes_H V
\]
for some $f_i \in \mathcal{O}_{m_i}$. Let us denote by $F$ the product of differential operators with coefficients in $\Lalg_M$:
\[
F_{j_1,\dots, j_n} = \prod\limits_{i=1}^n
    F_{ij_i}(\lambda_{M_{i-1}+1} - \partial_{z_{M_{i-1}+1}}, \ldots , \lambda_{M_{i}} - \partial_{z_{M_i}}).
\]

Then
\begin{equation} \label{eq:ChEnd-Comp}
    Z_{\lambda_1, \dots , \lambda_M}^{z_1, \dots , z_M}(v_1, \dots , v_M; f)
    =
    \sum\limits_{j_1,\dots, j_n}
    X^{z_{M_1}, \ldots , z_{M_n}}_{\Lambda_1, \ldots , \Lambda_n}
    \left( w_{1j_1},\dots, w_{nj_n};
    F_{j_1,\dots, j_n}g
    \big|_{z_k=z_{M_i},\, k=M_{i-1}+1,\dots , M_i-1}\right).
\end{equation}

Let us consider the simplest examples of compositions of chiral endomorphisms.

\begin{example} \label{exmp:ChEnd-Ex_1}
 Let us compute the composition $X \circ_1 Y = X \circ (Y, I)$ for $X, Y \in \ChEnd_V(2)$. We use the rule \eqref{eq:ChEnd-Comp}: here $n=2$, $m_1=2$, $m_2=1$, $M_1=2$, $M_2=M=3$, $\Lambda_1 = \lambda_1 + \lambda_2$, and $\Lambda_2 = \lambda_3$. The decomposition \eqref{eq:split-O} for $\omega_3$ takes the form
\[
\omega_3 = \dfrac{1}{z_{21}z_{31}z_{32}} = f_1 \cdot f_2 \cdot g = \dfrac{1}{z_{21}} \cdot 1 \cdot \dfrac{1}{z_{32}z_{31}}.
\]
Since $f_1 = \omega_2$, we have
\[
Y^{z_1,z_2}_{\lambda_1,\lambda_2}(a,b; f_1) = \sum_{n \ge -1} \dfrac{\lambda_1^{n+1}}{(n+1)!} (a \oo{(n)} b)_Y
\]
for $a, b \in V$.

Then for $Z = X \circ_1 Y$ and $a, b, c \in V$, we have
\begin{equation}     \label{eq:ChEnd_X1Y}
        Z_{\lambda_1,\lambda_2,\lambda_3}^{z_1,z_2,z_3}(a,b,c;\omega_3) = \sum_{n \ge -1}
        X_{\lambda_1+\lambda_2,\lambda_3}^{z_2, z_3}
        \left(
        (a \oo{(n)} b)_Y, c;
        \dfrac{(\lambda_1-\partial_{z_1})^{n+1}}{(n+1)!}
        \frac{1}{(z_3-z_2)(z_3-z_1)} \bigg|_{z_1=z_2} \right).
 \end{equation}

Note that for any polynomial $F(t) \in \Bbbk[t]$, the following equality holds:
\begin{equation} \label{eq:ChEnd-UsefulEq_1}
    F(\lambda_1 - \partial_{z_1}) \frac{1}{(z_3-z_2)(z_3-z_1)}\bigg|_{z_1=z_2} = \int\limits^{\partial_{z_2}}_{0} F(\lambda_1 - \tau) \,d\tau \left( \frac{1}{z_3-z_2}\right).
\end{equation}
Indeed, let us consider the left-hand side of \eqref{eq:ChEnd-UsefulEq_1} and apply Taylor's formula to the polynomial $F(\lambda_1-\partial_{z_1})$:
\begin{multline*}
    F(\lambda_1 - \partial_{z_1}) \frac{1}{(z_3-z_2)(z_3-z_1)}\bigg|_{z_1=z_2} = \sum_{n \geq 0} (-1)^n F^{(n)}(\lambda_1) \frac{\partial_{z_1}^n}{n!} \frac{1}{(z_3-z_2)(z_3-z_1)}\bigg|_{z_1=z_2} = \\ =  \sum_{n \geq 0} (-1)^n F^{(n)}(\lambda_1) \frac{1}{(z_3-z_2)^{n+2}}.
\end{multline*}
On the other hand, the right-hand side of \eqref{eq:ChEnd-UsefulEq_1} can be computed explicitly:
\begin{multline*}
    \int\limits^{\partial_{z_2}}_{0} F(\lambda_1 - \tau)\, d\tau \left(\frac{1}{z_3-z_2} \right)
    = \sum_{n \geq 0} (-1)^n F^{(n)}(\lambda_1)
    \int\limits^{\partial_{z_2}}_{0}  \frac{\tau^{n}}{n!}\,d\tau \left(\frac{1}{z_3-z_2}\right) = \\
    = \sum_{n \geq 0} (-1)^n F^{(n)}(\lambda_1) \frac{\partial^{n+1}_{z_2}}{(n+1)!} \frac{1}{z_3-z_2} = \sum_{n \geq 0} (-1)^n F^{(n)}(\lambda_1) \frac{1}{(z_3-z_2)^{n+2}}.
\end{multline*}
We see that the resulting expressions coincide.

Applying the equality \eqref{eq:ChEnd-UsefulEq_1} and the $\mathcal{D}_n$-linearity condition \eqref{eq:ChEnd-3/2-lin} to the right-hand side of \eqref{eq:ChEnd_X1Y}, we obtain
\begin{multline*}
        (X\circ_1 Y)_{\lambda_1,\lambda_2,\lambda_3}^{z_1,z_2,z_3}
        (a,b,c;\omega_3)
        =
        \sum_{n\ge -1}
        X_{\lambda_1+\lambda_2,\lambda_3}^{z_2,z_3}
        \left( (a\oo{(n)} b)_Y, c;
        \int_{0}^{\partial_{z_2}}
          \dfrac{(\lambda_1-\tau)^{n+1}}{(n+1)!}\,d\tau \dfrac{1}{z_3-z_2}
        \right )
        \\
        =
        \sum_{n\ge -1}
        X_{\lambda_1+\lambda_2,\lambda_3}^{z_2,z_3}
        \left( \int_{0}^{\partial+\lambda_1+\lambda_2}
        \dfrac{(\lambda_1-\tau)^{n+1}}{(n+1)!}\,d\tau
                ( a\oo{(n)} b)_Y, c ;\omega_2(z_2,z_3) \right).
 \end{multline*}
Note that the "upper" variables of the function $X$ are $z_2$ and $z_3$; therefore, by $\mathcal{D}_n$-linearity, the differentiation $\partial_{z_2}$ is transferred to the first argument of $X$ as the operator $\partial + \lambda_1 + \lambda_2$, since the "lower" variables of $X$ are $\lambda_1 + \lambda_2$ and $\lambda_3$. Furthermore, $\dfrac{1}{z_3-z_2}$ coincides with $\omega_2$ in the variables $z_2, z_3$.

We conclude the computation of the composition $X \circ_1 Y$:
\begin{multline}\label{eq:X-1-Yformula}
        (X\circ_1 Y)_{\lambda_1,\lambda_2,\lambda_3}^{z_1,z_2,z_3}
        (a,b,c;\omega_3) \\
       =
X_{\lambda_1+\lambda_2,\lambda_3}^{z_2,z_3}((\partial+\lambda_1+\lambda_2)(a\cdot_Y b), c; \omega_2)
+
\sum_{n\ge 0}
        X_{\lambda_1+\lambda_2,\lambda_3}^{z_2,z_3}
        \left( \int_{0}^{\partial+\lambda_1+\lambda_2}
        \dfrac{(\lambda_1-\tau)^{n+1}}{(n+1)!}\,d\tau
                ( a\oo{(n)} b)_Y, c ; \omega_2 \right)
\\
=
X_{\lambda_1+\lambda_2,\lambda_3}^{z_2,z_3}((\partial+\lambda_1+\lambda_2)(a\cdot_Y b), c; \omega_2)
+
\sum_{n\ge 0}
        X_{\lambda_1+\lambda_2,\lambda_3}^{z_2,z_3}
        \left( \dfrac{\lambda_1^{n+2}}{(n+2)!}
                ( a\oo{(n)} b)_Y, c ; \omega_2 \right)
\\
-
\sum_{n\ge 0}
        X_{\lambda_1+\lambda_2,\lambda_3}^{z_2,z_3}
        \left( \dfrac{(-\partial-\lambda_2)^{n+2}}{(n+2)!}
                ( a\oo{(n)} b)_Y, c ; \omega_2 \right).
\end{multline}
\end{example}

\begin{example} \label{exmp:ChEnd-Ex_2}
Let us compute the composition $X \circ_2 Y = X \circ (I, Y)$ for $X, Y \in \ChEnd_V(2)$. We use the composition rule \eqref{eq:ChEnd-Comp}: in this case $n = 2$, $m_1 = 1$, $m_2 = 2$, $M_1 = 1$, $M_2 = 3$, $\Lambda_1 = \lambda_1$, and $\Lambda_2 = \lambda_2 + \lambda_3$.
The decomposition \eqref{eq:split-O} for $\omega_3$ takes the form
\[
\omega_3 = \frac{1}{z_{21}z_{31}z_{32}} = f_1 \cdot f_2 \cdot g = 1 \cdot \frac{1}{z_{32}} \cdot \frac{1}{z_{21}z_{31}}.
\]
Since $f_2 = \omega_2(z_2, z_3)$, we have
\[
Y_{\lambda_2, \lambda_3}^{z_2,z_3}(b,c;f_2) = \sum_{n \ge -1} \frac{\lambda_2^{n+1}}{(n+1)!} (b \oo{(n)} c)_Y
\]
for $b, c \in V$.

Then for $Z = X \circ_2 Y$ and $a, b, c \in V$, we have
\begin{multline}
Z^{z_1,z_2,z_3}_{\lambda_1,\lambda_2,\lambda_3}(a,b,c;\omega_3) = \sum_{n \geq -1} X_{\lambda_1,\lambda_2+\lambda_3}^{z_1,z_3} \left(a, (b \oo{(n)} c)_{Y} ; \frac{(\lambda_2 - \partial_{z_2})^{n+1}}{(n+1)!} \frac{1}{(z_2-z_1)(z_3-z_1)} \bigg|_{z_2=z_3} \right).
\end{multline}
Next, we use equality \eqref{eq:ChEnd-UsefulEq_1} with permuted variables:
\begin{equation}\label{eq:ChEnd-UsefulEq_2}
    F(\lambda_2-\partial_{z_2}) \frac{1}{(z_2-z_1)(z_3-z_1)}\bigg|_{z_2=z_3} = - \int\limits_{0}^{\partial_{z_3}} F(\lambda_2-\tau) d\tau \left( \frac{1}{z_3-z_1} \right).
\end{equation}
Applying \eqref{eq:ChEnd-UsefulEq_2} and the $\mathcal{D}_n$-linearity condition, we obtain
\begin{multline}
    (X \circ_2 Y)^{z_1,z_2,z_3}_{\lambda_1,\lambda_2,\lambda_3}(a,b,c;\omega_3) = \sum_{n \geq -1} X_{\lambda_1,\lambda_2+\lambda_3}^{z_1,z_3} \left(a, (b \oo{(n)} c)_Y ; -\int\limits_{0}^{\partial_{z_3}} \frac{(\lambda_2 - \tau)^{n+1}}{(n+1)!} \, d\tau \frac{1}{z_3-z_1} \right) \\
    = \sum_{n \ge -1} X_{\lambda_1,\lambda_2+\lambda_3}^{z_1,z_3} \left( a, - \int\limits_{0}^{\partial+\lambda_2+\lambda_3} \frac{(\lambda_2 - \tau)^{n+1}}{(n+1)!} \, d\tau (b \oo{(n)} c)_Y ; \omega_2(z_1,z_3) \right).
\end{multline}
\end{example}

Note that by Taylor's formula,
\[
\int\limits_0^{\partial+\lambda_2+\lambda_3} F(\tau)\,d\tau
= \int\limits_0^{\lambda_2+\lambda_3} F(\tau)\,d\tau
+ \partial F(\lambda_2+\lambda_3) +
\dfrac{\partial^2}{2!} F'(\lambda_2+\lambda_3) + \dots ,
\]
which for $F(\tau ) = \dfrac{(\lambda_2-\tau)^{n+1}}{(n+1)!}$ yields
\begin{multline*}
\int\limits_0^{\partial+\lambda_2+\lambda_3} \dfrac{(\lambda_2-\tau)^{n+1}}{(n+1)!}\,d\tau
=
\int\limits_0^{\lambda_2+\lambda_3} \dfrac{(\lambda_2-\tau)^{n+1}}{(n+1)!}\,d\tau
+\sum\limits_{s= 1}^{n+2}(-1)^{s-1}
\dfrac{\partial^s}{s!} \dfrac{(-\lambda_3)^{n+2-s}}{(n+2-s)!}\\
=\dfrac{\lambda_2^{n+2}}{(n+2)!}
+
\sum\limits_{s= 0}^{n+2}(-1)^{s-1}
\dfrac{\partial^s}{s!} \dfrac{(-\lambda_3)^{n+2-s}}{(n+2-s)!}
=
\dfrac{\lambda_2^{n+2}}{(n+2)!}
-
\sum\limits_{s= 0}^{n+2}
\dfrac{(-\partial)^s}{s!} \dfrac{(-\lambda_3)^{n+2-s}}{(n+2-s)!}.
\end{multline*}

Returning to the computation of the composition $X \circ_2 Y$:
\begin{multline*}
(X \circ_2 Y)_{\lambda_1,\lambda_2,\lambda_3}^{z_1,z_2,z_3}(a,b,c;\omega_3) \\
=
\sum_{n \ge -1} X_{\lambda_1,\lambda_2+\lambda_3}^{z_1,z_3} \left( a, - \dfrac{\lambda_2^{n+2}}{(n+2)!}(b \oo{(n)} c)_Y
+
\sum_{s\ge 0}
\dfrac{(-\partial)^s}{s!} \dfrac{(-\lambda_3)^{n+2-s}}{(n+2-s)!} (b \oo{(n)} c)_Y ; \omega_2 \right)
\\
=
-\lambda_2 X_{\lambda_1,\lambda_2+\lambda_3}^{z_1,z_3} (a, (b \cdot_Y c); \omega_2)
-\sum_{n \ge 0}
\dfrac{\lambda_2^{n+2}}{(n+2)!}
X_{\lambda_1,\lambda_2+\lambda_3}^{z_1,z_3} (a, (b \oo{(n)} c)_Y; \omega_2)
\\
-\lambda_3 X_{\lambda_1,\lambda_2+\lambda_3}^{z_1,z_3}
\left(a, (b \cdot_Y c) ; \omega_2 \right)
-
X_{\lambda_1,\lambda_2+\lambda_3}^{z_1,z_3}
(a,\partial (b \cdot_Y c) ; \omega_2)
\\
+ \sum_{n\ge 0}
\sum_{s= 0}^{n+2}
\dfrac{(-\lambda_3)^{n+2-s}}{(n+2-s)!}
X_{\lambda_1,\lambda_2+\lambda_3}^{z_1,z_3}
\left(a,
\dfrac{(-\partial)^s}{s!} (b \oo{(n)} c)_Y ; \omega_2 \right).
\end{multline*}
In this expression, the explicit occurrences of the variable $\lambda_3$ should be replaced by $-\partial - \lambda_1 - \lambda_2$ in accordance with the structure of $L_3 \otimes_H V$. Using the differential property of $\partial$, we obtain
\begin{multline}\label{eq:X-2-Yformula}
(X \circ_2 Y)_{\lambda_1,\lambda_2,\lambda_3}^{z_1,z_2,z_3}(a,b,c;\omega_3)
=
\\
X_{\lambda_1,\lambda_2+\lambda_3}^{z_1,z_3}((\lambda_1+\partial)a, b \cdot_Y c; \omega_2)
-\sum_{n \ge 0}
\dfrac{\lambda_2^{n+2}}{(n+2)!}
X_{\lambda_1,\lambda_2+\lambda_3}^{z_1,z_3} (a, (b \oo{(n)} c)_Y; \omega_2)
\\
+ \sum_{n\ge 0}
\sum_{s= 0}^{n+2}
\dfrac{(\partial+\lambda_1+\lambda_2)^{n+2-s}}{(n+2-s)!}
X_{\lambda_1,\lambda_2+\lambda_3}^{z_1,z_3}
\left(a,
\dfrac{(-\partial)^s}{s!} (b \oo{(n)} c)_Y ; \omega_2 \right).
\end{multline}

We define the right action of the symmetric group $S_n$ on $\ChEnd_V(n)$ as follows: for $X \in \ChEnd_V(n)$ and $\sigma \in S_n$, let
\begin{equation}\label{eq:ChEnd-SymAct}
    (X^{\sigma})_{\lambda_1, \dots , \lambda_n}^{z_1, \dots , z_n}(v_1, \dots , v_n; f) = X_{\lambda_{\sigma^{-1}(1)}, \dots , \lambda_{\sigma^{-1}(n)}}^{z_1 , \dots , z_n}(v_{\sigma^{-1}(1)}, \dots , v_{\sigma^{-1}(n)}; f^{\sigma}),
\end{equation}
where $v_i \in V$, $f \in \mathcal{O}_n$, and $f^\sigma = f(z_{\sigma^{-1}(1)}, \dots, z_{\sigma^{-1}(n)})$. In other words, the group $S_n$ acts on $\ChEnd_V(n)$ from the right by permuting the elements $v_1, \dots , v_n$ and simultaneously permuting the variables $\lambda_1, \dots , \lambda_n$.

\begin{example}\label{exmp:(12)Action}
Let $X \in \ChEnd_V(2)$ be given by formula \eqref{eq:ChEnd(2)presentation}. Then
\[
(X^{(12)})^{z_1,z_2}_{\lambda_1,\lambda_2}
(u,v; \omega_2)
=
X^{z_1,z_2}_{\lambda_2,\lambda_1} (v,u; \omega_2^{(12)})
= - v\cdot_X u - \int_{0}^{\lambda_2}
(v\oo\sigma u)_X d\sigma
= - v\cdot_X u - \!\!\int_{0}^{-\partial-\lambda_1}
(v\oo\sigma u)_X d\sigma ,
\]
since multiplication by $\lambda_2 + \lambda_1$ coincides with the action of $\partial$.
Let us decompose the last integral on the right-hand side into the sum
\[
\int_{0}^{-\partial-\lambda_1}
(v\oo\sigma u)_X d\sigma
=
\int_{0}^{-\partial}
(v\oo\sigma u)_X d\sigma
+
\int_{-\partial }^{-\partial-\lambda_1}
(v\oo\sigma u)_X d\sigma
\]
and perform a change of variable $\sigma = -\partial - \tau$ in the second summand. We obtain
\[
(X^{(12)})^{z_1,z_2}_{\lambda_1,\lambda_2}
(u,v; \omega_2)
=
-v\cdot_X u + \int_{-\partial}^{\lambda_1}
(v\oo{-\partial-\tau } u)_X d\tau
= - v\cdot_X u + \int_{-\partial}^0 (v \oo{\tau} u)_X d\tau
+ \int_{0}^{\lambda_1} (v\oo{-\partial-\tau} u)_X d\tau.
\]
Thus, in particular,
\begin{equation}\label{eq:Commutator}
u\cdot_{X^{(12)}} v = -v\cdot_X u +
\int_{-\partial}^0 (v \oo{\tau} u)_X d\tau,
\quad
(u\oo{\sigma} v)_{X^{(12)}}
 = (v\oo{-\partial-\sigma } u)_X.
\end{equation}
\end{example}

\begin{proposition}[\cite{BDKH-2019}]
The family of spaces $\ChEnd_V(n)$, $n \ge 1$, equipped with the composition \eqref{eq:ChEnd-Comp} and the action of the symmetric groups \eqref{eq:ChEnd-SymAct}, is a symmetric operad.
\end{proposition}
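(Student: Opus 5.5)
The plan is to avoid checking the operad axioms by hand on the explicit formula \eqref{eq:ChEnd-Comp}. Instead, I will identify $\ChEnd_V$ with an operad built from a pseudo-tensor (multicategory) structure, for which the axioms are formal consequences of the functoriality of certain tensor products and restriction maps. Concretely, I would rewrite $\ChEnd_V(n)$ via the tensor–hom adjunction as $\Hom_{\mathcal D_n}(V^{\otimes n}\otimes \mathcal O_n, \Lalg_n\otimes_H V)$. Here $V^{\otimes n}\otimes\mathcal O_n$ is the "$n$-fold chiral tensor" of $V$, and $\Lalg_n\otimes_H V \cong H^{\otimes n}\otimes_H V$ is the pseudo-tensor target familiar from \cite{BDK-2001}, after identifying $\Lalg_n$ with $H^{\otimes n}$ via $\lambda_k\leftrightarrow -\partial_{(k)}$ (Fourier transform). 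Under this identification the right $\mathcal D_n$-action on $\Lalg_n$ becomes the natural action of $\Bbbk[z_{ij}][\partial_{z_k}]$ on $H^{\otimes n}$.

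The first step is to check that the composition \eqref{eq:ChEnd-Comp} is well defined. This means showing three things.

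1. The result does not depend on the choice of the factorization \eqref{eq:split-O}. If $f=g\prod f_i$ is changed to $f=(gh^{-1})\prod (f_i h_i)$ with $h=\prod h_i$, the $\mathcal D_{m_i}$-linearity \eqref{eq:ChEnd-3/2-lin} of $Y_i$ moves $h_i$ onto $F_{ij}$ as $h_i(\ldots)$ acting through $\lambda-\partial_z$. This matches multiplying $g$ by $h$ before the restriction $z_k=z_{M_i}$.
2. The restriction $z_k=z_{M_i}$ makes sense, because $g$ has no poles on the diagonals.
3. $Z$ is $\mathcal D_M$-linear. This I would verify on generators: $z_{lk}$ for $k,l$ in the same block transfers to $Y_i$; $z_{lk}$ for mixed indices transfers to $g$; and $\partial_{z_k}$ acts by Leibniz on $g\prod f_i$. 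The restriction intertwines $\sum_{k\in\text{block }i}\partial_{z_k}$ with $\partial_{z_{M_i}}$ and $\sum\lambda_k$ with $\Lambda_i$, which is what makes the result compatible with $X$'s sesquilinearity.

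Next come the unit and equivariance axioms. The unit axiom is immediate: for $Y_i=I$ we have $F=1$, and for $X=I$ there is a single block. Equivariance follows because \eqref{eq:ChEnd-SymAct} permutes $v$, $\lambda$ and $z$ simultaneously, so the factorization \eqref{eq:split-O} is transported by the permutation.

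The main obstacle is associativity: $(X\circ(Y_i))\circ(W_{ij}) = X\circ(Y_i\circ(W_{ij}))$. My approach is to reduce it to an identity about iterated factorizations and restrictions. Given a nested partition of $\{1,\dots,N\}$, factor $f=g\cdot\prod_i g_i\cdot\prod_{i,j} f_{ij}$, where $g$ is regular on the coarse diagonals, each $g_i$ is regular on the fine diagonals within block $i$, and each $f_{ij}$ depends only on the fine block. Computing either side then amounts to applying the operators $F(\lambda-\partial_z)$ coming from $W_{ij}$ and from $Y_i$, followed by the diagonal restrictions. The key lemma is that the differential operator in the $z$-variables of a fine block, followed by restriction to the fine diagonal, commutes appropriately with multiplication by $g$ and with the later coarse restriction. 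This is a Leibniz/Taylor compatibility of the same kind as \eqref{eq:ChEnd-UsefulEq_1}, and it holds because $\lambda_k-\partial_{z_k}$ acting on a product and then restricting equals restriction of the sum-variable operator $\Lambda-\partial_{z_{M_i}}$.

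The delicate point is the bookkeeping of the shifted variables: one must check that the composite operator in the $\lambda$'s is the same on both sides. I would first verify this in the model case $X\circ_1(Y\circ_1 W)$ versus $(X\circ_1 Y)\circ_1 W$, following Example~\ref{exmp:ChEnd-Ex_1}, before writing the general argument. Alternatively, one can simply cite \cite{BDKH-2019}, where this is established by identifying $\ChEnd_V$ with the chiral pseudo-tensor category of \cite{BD-Chiral}.
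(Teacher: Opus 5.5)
\paragraph{Comparison with the paper.} The paper gives no argument for this proposition: it is imported from \cite{BDKH-2019}, which is exactly your closing fallback. Everything before that is a genuinely different, self-contained route. Its well-definedness and unit checks are sound, but as written it does not yet establish associativity, which is the only nontrivial axiom.

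Your opening paragraph also does no work. Identifying $\Lalg_n$ with $H^{\otimes n}$ only transports the target. The composition \eqref{eq:ChEnd-Comp} is not that of the pseudo-tensor category of \cite{BDK-2001}, which has no $\mathcal O_n$. Making the axioms formal would require identifying \eqref{eq:ChEnd-Comp} with the Beilinson--Drinfeld chiral composition, which you never attempt; in fact you go on to check everything by hand.

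\paragraph{Associativity.} Taken literally, your key lemma is false. The operator $F^W(\lambda-\partial_z)$ produced by the $W_{ij}$ does \emph{not} commute with multiplication by the coarse factor $g$. Also, a single $\lambda_k-\partial_{z_k}$ does not pass through the restriction; only block sums do. What is actually needed is the following pair of identities, where $\Delta_B$ is the substitution $z_k=z_{\max B}$ for $k\in B$:
\[
P(\lambda-\partial_z)(uv)=\sum_{\alpha}\frac{1}{\alpha!}\bigl((\partial_\lambda^{\alpha}P)(\lambda-\partial_z)u\bigr)\,(-\partial_z)^{\alpha}v,
\qquad
\Bigl(\sum_{k\in B}\partial_{z_k}\phi\Bigr)\Big|_{\Delta_B}=\partial_{z_{\max B}}\bigl(\phi|_{\Delta_B}\bigr).
\]
Write $F^W(\lambda-\partial_z)g_i=\sum_\beta\lambda^\beta\phi_\beta$ with $\phi_\beta$ free of $\lambda$.

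In $X\circ(Y_i\circ(W_{i1},\dots,W_{im_i}))$, the monomials $\lambda^\beta$ are carried into the output polynomial of the inner composite. Later they become $(\lambda-\partial_z)^\beta$ acting on $g$.

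In $(X\circ Y)\circ W$, the same operator hits $g\,g_i$, and the first identity rewrites this as $\sum_\beta\phi_\beta\,(\lambda-\partial_z)^\beta g$.

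The second identity then shows that substituting $\Lambda^{\mathrm{fine}}-\partial_{z_{\mathrm{rep}}}$ into the output of $Y_i$ after the fine restriction agrees with substituting $\lambda-\partial_z$ before it. Finally, fine restriction followed by coarse restriction is just coarse restriction.

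\paragraph{Two further points.} The same chain rule is what makes \eqref{eq:ChEnd-Comp} independent of the representative $\sum_jF_{ij}\otimes_Hw_{ij}$ in $\Lalg_{m_i}\otimes_HV$. List this among your well-definedness checks, since your step 1 only covers the choice of factorization.

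Equivariance is not automatic either: you must first check that $X^\sigma$ is again $\mathcal D_n$-linear. Since $v_i$ sits in slot $\sigma(i)$, sesquilinearity forces $f^\sigma=f(z_{\sigma(1)},\dots,z_{\sigma(n)})$. By contrast, \eqref{eq:ChEnd-SymAct} as printed uses $\sigma^{-1}$. The two agree only when $\sigma^2=1$, which is the only case the paper ever uses.
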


\begin{example}
The equivariance property of composition in a symmetric operad implies, in particular,
\[
  X\circ_2 Y = (X^{(12)}\circ( Y^{(12)}, I))^{(13)}
  = (X^{(12)}\circ_1 Y^{(12)})^{(13)}.
\]
Indeed, let us consider the summands containing both operations $\cdot_X$ and $\cdot_Y$ in the formula \eqref{eq:X-2-Yformula} for $X\circ_2 Y$ evaluated on elements $a,b,c\in V$:
\[
(\lambda_1+\partial )a\cdot_X(b\cdot_Y c).
\]
On the other hand, computing $(X^{(12)}\circ_1 Y^{(12)})^{(13)}$ on the elements $a,b,c\in V$ using formula \eqref{eq:X-1-Yformula} and taking \eqref{eq:Commutator} into account, we obtain the following summands with the operations $\cdot_X$ and $\cdot_Y$:
\[
- a\cdot_X (\partial+\lambda_2+\lambda_3)(b\cdot_Y c)
\]
(the minus sign appears because $\omega_3^{(13)}=-\omega_3$). Eliminating $\lambda_3$ as was done in Example~\ref{exmp:ChEnd-Ex_2}, we see that the resulting expressions coincide:
\begin{multline*}
- a\cdot_X (\partial+\lambda_2+\lambda_3)(b\cdot_Y c)
= -(\lambda_2+\lambda_3) (a\cdot_X (b\cdot_Y c))
-(a\cdot _X \partial (b\cdot_Y c))
\\
=(\lambda_1+\partial) (a\cdot_X (b\cdot_Y c))
-(a\cdot _X \partial (b\cdot_Y c))
=((\lambda_1+\partial)a\cdot _X (b\cdot_Y c)).
\end{multline*}
\end{example}

\begin{proposition}\label{prop:HadamardEnd}
Let $V$ be an $H$-module, and let $P$ be a vector space. Then there exists a morphism of operads
\[
\End_P \otimes \ChEnd_V \to \ChEnd_{P\otimes V},
\]
where $P\otimes V$ is viewed as an $H$-module with the action $\partial(p\otimes u) = p\otimes \partial u$.
\end{proposition}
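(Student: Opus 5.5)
The map is the obvious one. For $\varphi\in\End_P(n)$ and $X\in\ChEnd_V(n)$ we define $\Phi(\varphi\otimes X)\in\ChEnd_{P\otimes V}(n)$ by
\[
\Phi(\varphi\otimes X)^{z_1,\dots,z_n}_{\lambda_1,\dots,\lambda_n}(p_1\otimes v_1,\dots,p_n\otimes v_n; f)
= \varphi(p_1,\dots,p_n)\otimes X^{z_1,\dots,z_n}_{\lambda_1,\dots,\lambda_n}(v_1,\dots,v_n;f).
\]
Here the right-hand side is read in $\Lalg_n\otimes_H(P\otimes V)$, via the canonical identification $P\otimes(\Lalg_n\otimes_H V)\cong \Lalg_n\otimes_H(P\otimes V)$. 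This identification holds because $H$ acts trivially on $P$.

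We first check that $\Phi(\varphi\otimes X)$ is a well-defined $\mathcal D_n$-homomorphism. The space $(P\otimes V)^{\otimes n}\otimes\mathcal O_n$ is identified with $P^{\otimes n}\otimes(V^{\otimes n}\otimes\mathcal O_n)$. Under this identification the right $\mathcal D_n$-action \eqref{eq:Dn-action-left1}, \eqref{eq:Dn-action-left2} touches only the second factor, since $\partial$ acts on $p_k\otimes v_k$ through $v_k$. The same is true for the action on $\Lalg_n\otimes_H(P\otimes V)$, which is defined on $\Lalg_n$ alone. So $\Phi(\varphi\otimes X)$ is just $\varphi\otimes X$ under these identifications, and it is $\mathcal D_n$-linear because $X$ is. Equivalently, the sesquilinearity relations \eqref{eq:ChEnd-3/2-lin} for the new map follow from those for $X$. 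Linearity of $\Phi$ in $\varphi\otimes X$ is clear.

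Next come compatibility with the symmetric group action and with the unit. The action \eqref{eq:ChEnd-SymAct} permutes arguments together with the variables $\lambda_i$ and $z_i$. The action on $\End_P$ permutes the $p_i$ in the same way. Hence
\[
\Phi(\varphi^\sigma\otimes X^\sigma)=\Phi(\varphi\otimes X)^\sigma
\]
follows by direct substitution. The identity $\mathrm{id}_P\otimes I$ is sent to the identity of $P\otimes V$.

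The main step is compatibility with composition \eqref{eq:ChEnd-Comp}. Take $Y_i\in\ChEnd_V(m_i)$ and $\psi_i\in\End_P(m_i)$. We must show
\[
\Phi\bigl(\varphi\circ(\psi_1,\dots,\psi_n)\otimes X\circ(Y_1,\dots,Y_n)\bigr)=\Phi(\varphi\otimes X)\circ\bigl(\Phi(\psi_1\otimes Y_1),\dots,\Phi(\psi_n\otimes Y_n)\bigr).
\]
We evaluate the right-hand side with the recipe \eqref{eq:ChEnd-Comp}. The decomposition \eqref{eq:split-O} of $f$ depends only on $f$, so it is the same on both sides. Each inner value $\Phi(\psi_i\otimes Y_i)(\dots;f_i)$ equals $\sum_j F_{ij}\otimes_H(q_i\otimes w_{ij})$ with $q_i=\psi_i(p_{M_{i-1}+1},\dots,p_{M_i})$, where the polynomials $F_{ij}$ and elements $w_{ij}$ are exactly those coming from $Y_i$. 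Therefore the differential operators $F_{j_1,\dots,j_n}$, the substitution $z_k=z_{M_i}$, and the variables $\Lambda_i$ are literally the same as on the left-hand side. The outer evaluation $\Phi(\varphi\otimes X)(q_1\otimes w_{1j_1},\dots;\,\cdot\,)$ then produces
\[
\varphi(q_1,\dots,q_n)\otimes X(w_{1j_1},\dots;\,\cdot\,).
\]
Summing over $j_1,\dots,j_n$ gives precisely the left-hand side.

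The only point needing care, which I expect to be the main obstacle, is the independence of the representation. The elements $\sum_j F_{ij}\otimes_H w_{ij}$ are not canonically split, so we must check that \eqref{eq:ChEnd-Comp} does not depend on the chosen representative modulo the $\otimes_H$ relations. This independence is already implicit in the cited result of \cite{BDKH-2019} that $\ChEnd_V$ is an operad. Since tensoring with $P$ is exact and commutes with every operation involved, the same independence transfers to $P\otimes V$ verbatim. Hence the two sides of the composition identity agree, and $\Phi$ is a morphism of operads.
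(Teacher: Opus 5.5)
Your proposal is correct and follows the paper's own argument: the same componentwise map $\varphi\otimes X\mapsto \varphi(p_1,\dots,p_n)\otimes X(v_1,\dots,v_n;f)$, with $\mathcal D_n$-linearity holding because $\partial$ acts only on the $V$ factor, and compatibility with compositions and $S_n$-actions holding because both are componentwise, which you verify in more detail than the paper does. The representative-independence issue you flag needs no transfer argument, since the cited operad result applies directly to the $H$-module $P\otimes V$, so the composition in $\ChEnd_{P\otimes V}$ may be computed with your chosen representatives.
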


\begin{proof}
For $t\in \End_P(n)$ and $X\in \ChEnd_V(n)$, we define the image of $t\otimes X$ as the map
\begin{equation}\label{eq:HadamardEnd}
(t\otimes X)^{z_1,\dots, z_n}_{\lambda_1,\dots,\lambda_n}
(p_1\otimes u_1, \dots , p_n\otimes u_n; f)
=
t(p_1,\dots, p_n)\otimes X^{z_1,\dots, z_n}_{\lambda_1,\dots,\lambda_n}
(u_1, \dots , u_n; f)
\end{equation}
for $p_i\in P$, $u_i\in V$, and $f\in \mathcal O_n$.

The constructed map $(t\otimes X): (P\otimes V)^{\otimes n}\otimes \mathcal O_n \to L_n\otimes _H (P\otimes V)$ is $\mathcal D_n$-linear because $\partial$ acts on the second tensor factor. Since the compositions and the action of $S_n$ on the Hadamard product $\End_P\otimes \ChEnd_V$ are defined in the componentwise way,  the rule \eqref{eq:HadamardEnd} is compatible with the operad structure.
\end{proof}

\section{Manin product and left-symmetric chiral algebras}

Let $\Var$ be a variety of algebras with arbitrary operations (not necessarily binary) satisfying a family of multilinear identities. We will also denote the operad corresponding to this variety by $\Var$. Then the structure of a $\Var$-algebra on a vector space $A$ is exactly a morphism of operads $\Var \to \End_A$, where $\End_A$ is the operad of multilinear maps on $A$.

By replacing the operad of multilinear maps with the operad $\ChEnd_V$ for an $H$-module $V$, we obtain the definition of a {\em $\Var$-chiral algebra}: this is a morphism
\[
\Var \to \ChEnd_V.
\]

As shown in \cite{BDKH-2019}, the difference between the definition of a Lie chiral algebra and the classical definition of a vertex algebra lies solely in the fact that the morphism $\Lie \to \ChEnd_V$ does not explain what is the vacuum vector.

In this section, we will derive the relations of a left-symmetric chiral algebra in terms of the operations \eqref{eq:ChEnd(2)presentation}. To do so, we use the fact that the operad $\LSym$ of left-symmetric algebras is the result of a dendriform splitting of the operad $\Lie$ of Lie algebras. In general, the dendriform splitting construction is described by the so-called Manin product of operads.

The Manin products (white and black) for binary quadratic operads were introduced in \cite{GK-94}. The computation of the defining relations of these products for each specific pair of operads is a routine linear algebra problem. For the  Manin black product, this set of defining relations can be obtained using the following procedure \cite{KolSart2025_JAlg}.

Let $\Var$ and $\mathcal P$ be binary quadratic operads generated by finite-dimensional $S_2$-spaces $U$ and $E$, respectively. The operad $\Var$ is the image of the free operad $\mathcal F_U$ generated by the $S_2$-space $U$. Let us choose a basis $(\mu_j)_{j\in J}$ of the space $U$ and a basis $(e_i)_{i\in I}$ of the space $E$. The dual operad $\mathcal P^!$ is generated by the space of linear functions $E^*$, on which the action of the permutation $(12)\in S_2$ is given by the rule $\langle \alpha^{(12)}, e\rangle = -\langle \alpha , e^{(12)}\rangle$ for $\alpha \in E^*$ and $e\in E$. We denote the dual basis of the space $E^*$ by $(e_i^*)_{i\in I}$.

Let us denote by $\mathcal F_{M}$ the free operad generated by the $S_2$-space $M = U\otimes E\otimes \Bbbk_-$ with the componentwise action of the group $S_2$. Here and below, $\Bbbk_-$ denotes the 1-dimensional $S_2$-module with the skew-symmetric action. A basis of the space $M$ is formed by the tensors $u_{ij} = \mu_j\otimes e_i\otimes 1$ for $i\in I$ and $j\in J$.

Let us denote by $\Phi$ the  morphism of operads $\mathcal F_U \to \mathcal P^!\otimes \mathcal F_{M}$ given by the rule
\begin{equation}\label{eq:Phi-morphism}
\Phi (\mu_j) = \sum\limits_{i\in I} e_i^* \otimes u_{ij},\quad j\in J.
\end{equation}

As shown in \cite{KolSart2025_JAlg}, a morphism $\Psi :\mathcal F_{M}\to \mathcal Q$, where $\mathcal Q$ is some operad, defines the black Manin product of the operads $\Var$ and $\mathcal P$ if the kernel of the composition
\[
\Phi\circ (\mathrm{id}\otimes \Psi):    \mathcal F_U \to \mathcal P^!\otimes \mathcal F_{M}
    \to \mathcal P^!\otimes \mathcal Q
\]
coincides with the kernel of the epimorphism $\mathcal F_U\to \Var$. In this case, $\mathcal Q$ is denoted by $\Var\bullet \mathcal P$.

In other words, to compute the defining relations of the operad $\Var\bullet \mathcal P$, it is necessary to compute the images under $\Phi$ of all defining relations of the operad $\Var$.

\begin{example}\label{exmp:Pois}
Let $\mathcal P=\Com$ be the operad of commutative algebras; then $\mathcal P^!=\Lie$ is the operad of Lie algebras. Let us consider the operad $\Pois$ of Poisson algebras as $\Var$. The depolarized presentation of this operad (see, e.g., \cite{MarklRemm2004}) in terms of a single binary operation $\mu = x_1\cdot x_2$ contains a single relation
\begin{equation}\label{eq:Poisson-one}
p = x_1 \cdot (x_2 \cdot x_3) - (x_1 \cdot x_2) \cdot x_3
+ \dfrac{1}{3}
((x_1 \cdot x_3) \cdot x_2 + (x_2 \cdot x_3) \cdot x_1 - (x_2 \cdot x_1) \cdot x_3 - (x_3 \cdot x_1) \cdot x_2) .
\end{equation}
Thus, in our case $E=\Bbbk e$, $e^{(12)}=e$, and $U=\Bbbk \mu + \Bbbk \mu^{(12)}$. The morphism $\Phi$ maps the generating operation $\mu = x_1\cdot x_2$ of the free operad to the tensor $[x_1,x_2]\otimes x_1x_2$, where $x_1x_2 = \mu\otimes e\otimes 1$. Consequently, the relation \eqref{eq:Poisson-one} is mapped to
\begin{multline*}
\Phi(p) = [x_1,[x_2,x_3]]\otimes x_1 (x_2x_3)
-
[[x_1,x_2],x_3]\otimes (x_1 x_2) x_3 \\
+ \dfrac{1}{3}
(
[[x_1,x_3],x_2]\otimes (x_1 x_3) x_2
+[[x_2,x_3],x_1]\otimes  (x_2 x_3) x_1
\\
-[[x_2,x_1],x_3]\otimes  (x_2 x_1) x_3
-[[x_3,x_1],x_2]\otimes  (x_3 x_1) x_2
) .
\end{multline*}
Let us group the summands with the same first tensor factors from the space $\Lie(3)$ using the basis $e_1 = [[x_1,x_2],x_3]$ and $e_2 = [[x_1,x_3],x_2]$: the Jacobi identity allows us to write
\[
[x_1,[x_2,x_3]] = e_1-e_2.
\]
Hence, we obtain two defining relations of the operad $\Pois\bullet \Com$:
\[
r_1 = x_1 (x_2x_3)-(x_1 x_2) x_3
+ \dfrac{1}{3}
(-(x_2 x_3) x_1+(x_2 x_1) x_3),
\]
\[
r_2= -x_1 (x_2x_3)
+ \dfrac{1}{3}
((x_1 x_3) x_2+(x_2 x_3) x_1+(x_3 x_1) x_2).
\]
The relations $r_1$ and $r_2$ generate an $S_3$-space, which is easy to compute. Consider the sum
\[
3(r_1+r_2) = -2(x_1x_2)x_3 + (x_3x_1)x_2 + (x_2x_1)x_3.
\]
Together with the relations obtained by all possible permutations of the variables, this sum generates the space of polynomials containing $(x_1x_2)x_3-(x_{\sigma(1)}x_{\sigma(2)})x_{\sigma(3)}$ for $\sigma \in S_3$. The relation $r_2$ then turns into the usual associativity.

Thus, the operad $\Pois\bullet\Com$ is generated by a single non-symmetric binary operation, and the defining relations state that all monomials of the degree 3 are equal to each other, i.e., this operad is dual to the operad of Lie-admissible algebras.
\end{example}

Let $\mathcal P = \LSym$; then $\mathcal P^! = \Perm$ is the operad of associative left-commutative algebras, i.e., the following identities hold in $\mathcal P^!$-algebras:
\[
(x_1x_2)x_3 = x_1(x_2x_3)=x_2(x_1x_3).
\]
In this case, the space $E$ is two-dimensional and is isomorphic to $\Bbbk S_2$ as an $S_2$-module. Let us choose the basis $e_1=x_1x_2$,  $e_2 = e_1^{(12)}=x_2x_1$ of the space $E$. Note that for the dual basis $e_1^*, e_2^*$, we have
\[
(e_1^*)^{(12)}= - e_2^*.
\]

Let us now consider as $\Var$ the operad of Lie algebras, generated by  one-dimensional space $U$ with the basis $\mu = [x_1x_2]$, $\mu^{(12)} = -\mu$. Then $M=U\otimes E\otimes \Bbbk_-$ is a two-dimensional space with a basis $x_1\succ x_2 = \mu\otimes e_1\otimes 1$, $x_1\prec x_2 = \mu\otimes e_2\otimes 1$. It follows directly from the definition that
\[
x_2\succ x_1 = \mu^{(12)}\otimes e_1^{(12)}\otimes 1^{(12)} =(-\mu)\otimes e_2\otimes (-1) = x_1\prec x_2.
\]

The morphism $\Phi :\mathcal F_U\to \Perm\otimes \mathcal F_M$ takes the form
\[
\Phi(\mu )=e_1^*\otimes (x_1\succ x_2) - (e_1^*)^{(12)}\otimes (x_1\prec x_2),
\]
where $e_1^*$ and $e_2^*$ satisfy the relations of the $\Perm$ operad. Specifically, if we denote $e_1^* = p_1p_2$, then $e_2^* = -(e_1^*)^{(12)} = -p_2p_1$ and
\[
(p_1p_2)p_3 = p_1(p_2p_3) = p_2(p_1p_3).
\]

Thus, for any $\Perm$-algebra $P$ defined by a morphism $\Perm \to \End_P$, and for any chiral algebra $V$ whose structure is defined by a morphism $\mathcal F_M\to \ChEnd_V$, we obtain a chain of morphisms
\[
\mathcal F_U \to \Perm\otimes \mathcal F_M \to \End_P\otimes \ChEnd_V \to \ChEnd_{P\otimes V}.
\]
The last map is defined by Proposition \ref{prop:HadamardEnd}.

Consequently, the morphism $\Psi :\LSym \to \ChEnd_V$ defining the structure of a \emph{left-symmetric chiral algebra} on the $H$-module $V$ is given by operations $X,Y \in \ChEnd_V(2)$ such that $\Psi: x_1\succ x_2 \mapsto X$ and $x_1\prec x_2\mapsto Y$, where $Y = X^{(12)}$ and
\begin{equation}\label{eq:LSymPhiChiral}
(p\otimes a)\otimes (q\otimes b)\otimes f(z_1,z_2)
\mapsto
pq\otimes X^{z_1,z_2}_{\lambda_1,\lambda_2}(a,b; f)
- qp\otimes Y^{z_1,z_2}_{\lambda_1,\lambda_2}(a,b; f)
\end{equation}
determines the structure of a Lie chiral  algebra on the $H$-module $P\otimes V$ for any $\Perm$-algebra $P$. The described procedure is a particular case of the \emph{dendriform splitting} of an operad in the form described in \cite{GubKol-2014}.

Let us denote
\[
X^{z_1,z_2}_{\lambda_1,\lambda_2}(a,b; \omega_2)
= a\succ b + \int\limits_{0}^{\lambda_1} (a\oo{\sigma } b)\,d\sigma.
\]
Then, in view of \eqref{eq:Commutator}, the right-hand side of \eqref{eq:LSymPhiChiral} for $f=\omega_2$ takes the form
\[
pq\otimes (a\succ b) + qp\otimes (b\succ a)
- qp\otimes \int\limits_{-\partial}^0 (b\oo\tau a)\,d\tau
+ \int\limits_{0}^{\lambda _1} ( pq\otimes (a\oo\sigma b) -qp\otimes (b\oo{-\partial-\sigma } a))\,d\sigma .
\]
Thus, a left-symmetric chiral algebra is an $H$-module $V$ equipped with an operation $\succ : V\otimes V\to V$ and a 3/2-linear bracket $(\cdot \oo\sigma \cdot ): V\otimes V\to V[\sigma ]$ such that for any $\Perm$-algebra $P$ the space $P\otimes V$ with the operations
\begin{equation}\label{eq:SplitChiralOperations}
\begin{gathered}
(p\otimes a) \cdot (q\otimes b)
 = pq\otimes (a\succ b) + qp\otimes (b\succ a)
- qp\otimes \int\limits_{-\partial}^0 (b\oo\tau a)\,d\tau,
\\
[(p\otimes a) \oo\sigma (q\otimes b)] =  pq\otimes (a\oo\sigma b) -qp\otimes (b\oo{-\partial-\sigma } a)
\end{gathered}
\end{equation}
is a Lie chiral algebra. Recall \cite{BK2002} that the operations of a Lie chiral algebra satisfy the following axioms:

\begin{itemize}
    \item Differential properties derived from the $\mathcal D_2$-linearity of the operation in a chiral algebra:
    \[
    \partial (x\cdot y) = (\partial x\cdot y) + (x\cdot \partial y),
    \quad
    [\partial x\oo\sigma  y] = -\sigma [x\oo\sigma y],
    \quad
    [x\oo\sigma \partial y] = (\partial+\sigma )[x\oo\sigma y].
    \]
    \item Skew-symmetry:
    \[
    x\cdot y = y\cdot x +\int\limits_{-\partial }^0 [x\oo\sigma y]\,d\sigma ,
    \quad
    [x\oo\sigma y ] = - [y\oo{-\partial -\sigma } x];
    \]
    \item Associator identity:
    \[
    (x\cdot y)\cdot z - x\cdot (y\cdot z)
    =\sum\limits_{n\ge 0}
    \dfrac{1}{(n+1)!}\partial^{n+1}
    (x\cdot [y\oo{(n)} z] + y\cdot [x\oo{(n)} z]),
    \]
    where, as above, $[x\oo{(n)} y] $ denotes the coefficient of $\lambda^n/n!$ in the polynomial $[x\oo\lambda y]$.
    \item Wick identity:
    \[
    [x\oo\sigma (y\cdot z)] =
    [x\oo\sigma y]\cdot z  + y\cdot [x\oo\sigma z]
    +\int\limits_0^\sigma [[x\oo\sigma y]\oo\lambda z]\,d\lambda .
    \]
    \item Conformal Jacobi identity:
    \[
    [x\oo\lambda [y\oo\sigma z]] -[y\oo\sigma [x\oo\lambda  z]]  = [[x\oo\lambda y]\oo{\sigma +\lambda} z].
    \]
\end{itemize}

Substituting expressions \eqref{eq:SplitChiralOperations} for the free $\Perm$-algebra $P$ into these axioms, we derive the conditions on the operations $\succ$ and $(\cdot\oo\sigma \cdot)$ that define a left-symmetric chiral algebra.

\begin{itemize}
    \item Differential properties:
    \begin{equation}\label{eq:LSym-Chiral-1}
    (\partial a\oo\sigma  b) = -\sigma (a\oo\sigma b),
    \quad
    (a\oo\sigma \partial b) = (\partial+\sigma )(a\oo\sigma b),
    \quad
    \partial (a\succ b) = (\partial a\succ b) + (a\succ \partial b).
    \end{equation}
\end{itemize}
Note that skew-symmetry and the commutator identity for operations \eqref{eq:SplitChiralOperations} are satisfied automatically.
For ease of notation, we will use the following definitions:
\begin{equation}\label{eq:Notations-prec-succ}
a\prec b = b\succ a -\int\limits_{-\partial }^0 (b\oo\tau a)\,d\tau ,
\quad
\{a\oo{(n)} b\} = \sum\limits_{s\ge 0}\dfrac{(-1)^{n+s}}{s!}\partial ^s (a\oo{(n+s)} b).
\end{equation}
\begin{itemize}
    \item The associator identity splits into three relations:
\begin{gather}
    (a*b)\succ c - a\succ (b\succ c)
     = \sum\limits_{n\ge 0} \dfrac{1}{(n+1)!} \partial ^{n+1} (a\succ (b\oo{(n)} c) + b\succ (a\oo{(n)} c) ), \label{eq:LSym-Chiral-Z1} \\
     (a\succ b)\prec c - a\succ (b\prec c)
     =
     \sum\limits_{n\ge 0} \dfrac{1}{(n+1)!} \partial ^{n+1} (b\prec [a\oo{(n)} c] -a\succ \{c\oo{(n)}b\}),
      \label{eq:LSym-Chiral-Z2} \\
     (a\prec b)\prec c - a\prec (b*c)
     =
     \sum\limits_{n\ge 0} \dfrac{1}{(n+1)!} \partial ^{n+1} (a\prec [b\oo{(n)} c] - b\succ \{c\oo{(n)}a\} ),
       \label{eq:LSym-Chiral-Z3}
\end{gather}
where $a*b = a\succ b + a\prec b$ and $[a\oo{(n)} b] = (a\oo{n} b) - \{b\oo{(n)} a\}$.
    \item Splitting the Wick identity leads to the following conditions:
\begin{gather}
    (a\oo\sigma (b\succ c)) = [a\oo\sigma b]\succ c + b\succ (a\oo\sigma c) +\int\limits_0^\sigma ([a\oo\sigma b]\oo\lambda c)\,d\lambda , \label{eq:LSym-Chiral-W1} \\
    (a\oo\sigma (b\prec c)) = (a\oo\sigma b)\prec c + b\prec [a\oo\sigma c] -\int\limits_0^\sigma \{ c\oo\lambda (a\oo\sigma b)\}\,d\lambda ,
    \label{eq:LSym-Chiral-W2} \\
    \{(b*c)\oo\sigma a \} = \{b\oo\sigma a\}\prec c + b\succ \{a\oo\sigma a\} - \int\limits_0^\sigma \{c\oo\lambda \{b\oo\sigma a\}\}\,d\lambda .
    \label{eq:LSym-Chiral-W3}
\end{gather}
    \item The conformal Jacobi identity on $P\otimes V$ also yields three identities on $V$:
\begin{gather}
    a\oo\lambda (b\oo\sigma c) - b\oo\sigma (a\oo\lambda c) =
    ([a\oo\lambda b]\oo{\sigma +\lambda }c),
     \label{eq:LSym-Chiral-J1} \\
     a\oo\lambda \{c\oo\sigma b\} - \{[a\oo\lambda c]\oo\sigma b\} = \{c\oo{\sigma +\lambda } (a\oo\lambda b)\},
     \label{eq:LSym-Chiral-J2} \\
     \{ [b\oo\sigma c]\oo\lambda a\}
     + \{c\oo{\sigma +\lambda } \{b\oo\lambda a\}\}
     = b\oo\sigma \{ c\oo\lambda a\}.
     \label{eq:LSym-Chiral-J3}
\end{gather}
\end{itemize}

Note that the relations \eqref{eq:LSym-Chiral-J1}--\eqref{eq:LSym-Chiral-J3} are equivalent to the left-symmetry condition \cite{HongLi2015} of the conformal component of the algebra $V$. The identities \eqref{eq:LSym-Chiral-Z1}--\eqref{eq:LSym-Chiral-W3} are not independent: for example, one can derive \eqref{eq:LSym-Chiral-W2} from \eqref{eq:LSym-Chiral-W1} by rewriting it in terms of the $\succ$ operation using \eqref{eq:Notations-prec-succ}. Similarly, \eqref{eq:LSym-Chiral-Z1} implies \eqref{eq:LSym-Chiral-Z2} and \eqref{eq:LSym-Chiral-Z3}.

We also note two facts that are essential for the purposes of this work.

\begin{remark}\label{rem:LSYmAb-Zinb}
If in a left-symmetric chiral algebra $V$ the equality $(a\oo\lambda b)=0$ holds for all $a,b\in V$, then $(V,\succ )$ is a pre-commutative (Zinbiel) algebra with a derivation $\partial$.
\end{remark}

\begin{proposition}\label{prop:QuadriLie}
Let $V$ be a left-symmetric chiral algebra. Then $(V,\succ, \prec )$ satisfies the axioms of a pre-left-symmetric (L-dendriform) algebra.
\end{proposition}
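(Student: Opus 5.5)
The plan is to read the L-dendriform axioms off from the three associator relations \eqref{eq:LSym-Chiral-Z1}--\eqref{eq:LSym-Chiral-Z3}. These hold in any left-symmetric chiral algebra: they come from the associator identity of the Lie chiral algebra $P\otimes V$ with the operations \eqref{eq:SplitChiralOperations}, where $P$ is the free $\Perm$-algebra. The key point is that their right-hand sides, which are sums of terms $\partial^{n+1}(\dots)/(n+1)!$, should cancel under suitable symmetrizations. The Wick and Jacobi relations are not needed.

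Recall the axioms of a pre-left-symmetric (L-dendriform) algebra with $a*b=a\succ b+a\prec b$:
\begin{gather*}
a\succ (b\succ c) - (a*b)\succ c = b\succ (a\succ c) - (b*a)\succ c,\\
a\succ (b\prec c) - (a\succ b)\prec c = b\prec (a* c) - (b\prec a)\prec c .
\end{gather*}

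\textbf{First axiom.} Consider \eqref{eq:LSym-Chiral-Z1}. Its right-hand side,
\[
\sum_{n\ge 0}\frac{1}{(n+1)!}\partial^{n+1}\bigl(a\succ (b\oo{(n)} c)+b\succ (a\oo{(n)} c)\bigr),
\]
is manifestly symmetric in $a$ and $b$. Writing \eqref{eq:LSym-Chiral-Z1} for $(a,b,c)$ and for $(b,a,c)$ and subtracting, the right-hand sides cancel. What remains is
\[
(a*b)\succ c - a\succ(b\succ c) = (b*a)\succ c - b\succ (a\succ c),
\]
which is the first axiom.

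\textbf{Second axiom.} Write \eqref{eq:LSym-Chiral-Z3} with $a$ and $b$ interchanged:
\[
(b\prec a)\prec c - b\prec (a*c) = \sum_{n\ge 0}\frac{1}{(n+1)!}\partial^{n+1}\bigl(b\prec [a\oo{(n)} c] - a\succ\{c\oo{(n)} b\}\bigr).
\]
This right-hand side is literally the right-hand side of \eqref{eq:LSym-Chiral-Z2}. Subtracting the two relations gives
\[
(a\succ b)\prec c - a\succ (b\prec c) = (b\prec a)\prec c - b\prec (a*c),
\]
which is the second axiom.

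\textbf{Expected obstacle.} The only delicate step is making sure the three splitting relations \eqref{eq:LSym-Chiral-Z1}--\eqref{eq:LSym-Chiral-Z3} are stated with exactly matching correction terms. In particular, $[a\oo{(n)}c]$ and $\{c\oo{(n)}b\}$ must be the same expressions in \eqref{eq:LSym-Chiral-Z2} and in \eqref{eq:LSym-Chiral-Z3} after the swap. I would double-check this by extracting the coefficients of the $\Perm$-monomials $p_1p_2p_3$, $p_2p_1p_3$ (equivalently $p_1(p_2p_3)$, $p_2(p_1p_3)$) and $p_3p_1p_2$ in the associator identity for $P\otimes V$. The correction terms attached to a given monomial of $P$ are what force the required symmetry, because $p_1p_2p_3=p_2p_1p_3$ in $\Perm$. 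Once that bookkeeping is confirmed, the proof is the two subtractions above.
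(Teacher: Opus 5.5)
Your proof is correct and is essentially the paper's argument. The paper likewise uses that the right-hand side of \eqref{eq:LSym-Chiral-Z1} is symmetric in $a,b$ and that the right-hand sides of \eqref{eq:LSym-Chiral-Z2} and \eqref{eq:LSym-Chiral-Z3} coincide after swapping $a$ and $b$, then subtracts to get the two L-dendriform identities; your proposed check via the $\Perm$-components of the associator confirms that \eqref{eq:LSym-Chiral-Z1}--\eqref{eq:LSym-Chiral-Z3} are stated consistently, but it is not needed once those relations are taken as given.
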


\begin{proof}
    It suffices to note that the right-hand sides of \eqref{eq:LSym-Chiral-Z2} and \eqref{eq:LSym-Chiral-Z3} coincide upon swapping the variables $a$ and $b$. Consequently, for any $a,b,c\in V$,
    \[
    (b\succ a)\prec c - b\succ (a\prec c) = (a\prec b)\prec c
    -a\prec (b*c).
    \]
    Similarly, the right-hand side of \eqref{eq:LSym-Chiral-Z1} is symmetric with respect to $a$ and $b$; therefore,
    \[
    (a*b)\succ c -a\succ (b\succ c) =
    (b*a)\succ c - b\succ (a\succ c)
    \]
    for all $a,b,c\in V$. The relations obtained are exactly the identities of an L-dendriform algebra \cite{BaiLiuNi2010}, derived via the dendriform splitting \cite{GubKol-2014} of the left-symmetry identity.
\end{proof}

We also note that any vertex (or Lie chiral) algebra $V$ with a Rota–Baxter operator $R$ in the sense of \cite{Bai_2026} is also a left-symmetric chiral algebra with respect to the operations
\[
a\succ b  = R(a)\cdot R(b),
\quad
(a\oo\sigma b) = [R(a)\oo\sigma b],
\quad a,b\in V.
\]

\section{Variety of Abelian Chiral Algebras}

\begin{definition}\label{defn:AbelianChiral}
Let $\Var$ be a binary operad generated by an $S_2$-space $U=\Var(2)$. A $\Var$-chiral algebra $V$ is called \emph{abelian} if for any $\mu \in U$ its image $X\in \ChEnd_V(2)$ satisfies
\[
X^{z_1,z_2}_{\lambda_1,\lambda_2}(u,v; 1) = 0
\]
for all $u,v\in V$.
\end{definition}

From the condition of $3/2$-linearity \eqref{eq:ChEnd-3/2-lin}, it follows that $X^{z_1,z_2}_{\lambda_1,\lambda_2}(u,v; f) = 0$
in an abelian chiral algebra
for all $f\in \Bbbk [z_{ij}\mid 1\le j<i\le n]= \mathcal D_n1 \subset \mathcal O_n$.

For any $H$-module $V$, we define a linear map $\ChEnd_V(n) \to \End_V(n)$ for $n\ge 1$ by the rule $X\mapsto \widehat X$, where
\[
\widehat X(u_1,\dots, u_n) = X^{z_1,\dots, z_n}_{\lambda_1,\dots , \lambda_n}(u_1,\dots, u_n; \omega _n)\big|_{\lambda_1=\dots=\lambda_{n-1}=0}
\]
for $u_i\in V$. This family of linear maps is not a morphism of operads, but it allows us to reformulate Definition \ref{defn:AbelianChiral} for a chiral algebra in view of relation \eqref{eq:ConfBracket-1}.
Namely, a $\Var$-chiral algebra is abelian if and only if for any $\mu\in \Var(2)$ its image $X \in \ChEnd_V(2)$ satisfies the equality
\[
X^{z_1,z_2}_{\lambda_1,\lambda_2}(u,v; \omega_2) = \widehat X(u,v)
\]
for all $u,v\in V$.

For example, a vertex ($\Var=\Lie$) algebra $V$ defined by the operation
\[
X^{z_1,z_2}_{\lambda_1,\lambda_2}(u,v; \omega_2) = u\cdot v +\int\limits_{0}^{\lambda_1} [u\oo{\sigma }v]\,d\sigma
\]
is abelian if and only if $[u\oo{\sigma }v] = 0$ for all $u,v\in V$. It is well known that such algebras are exactly associative-commutative differential algebras with respect to the product $u\cdot v$ and the derivation~$\partial$.

\begin{remark}\label{rem:S2Abelian}
If $V$ is an abelian $\Var$-chiral algebra, then for any $\mu\in \Var(2)$, its image $X\in \ChEnd_V(2)$ satisfies the condition
\[
\widehat{X^{(12)}} = -{\widehat X}^{(12)}.
\]
\end{remark}

This leads to the following problem statement, which we consider in this paper.
Let $V$ be an abelian $\Var$-chiral algebra, where $\Var$ is a binary quadratic operad generated by an $S_2$-space $U$. The chain of linear maps
\begin{equation}\label{eq:Diag-1}
\begin{CD}
 U\otimes\Bbbk_- @. @. @. \End_V     \\
@V\cong VV @. @. @AA \widehat{\ \cdot\ }A \\
U @>\subset >> \mathcal F_U @>>> \Var @>>> \ChEnd_V
\end{CD}
\end{equation}
defines a linear map from $U\otimes\Bbbk_-$ to $\End_V(2)$. According to Remark \ref{rem:S2Abelian}, this map is $S_2$-invariant, since the vertical arrows in the diagram \eqref{eq:Diag-1} are skew-symmetric.

Recall that the space $V$ is equipped with a linear operator $\partial$. Let us denote by $\widehat U$ the two-component graded space $\widehat U(1)\oplus \widehat U(2)$, where
\[
\widehat U(1)=\Bbbk \partial,
\quad \widehat U(2)=U\otimes \Bbbk_- .
\]
Let us extend the map $U\otimes \Bbbk_-\to \End_V$ given by diagram \eqref{eq:Diag-1} to a symmetric map $\widehat U\to \End_V$ under which $\partial \in \widehat U(1)$ is mapped to $\partial \in \End_V(1)$. This map defines an operad morphism
\[
\widehat V: \mathcal F_{\widehat U}\to \End_V.
\]
This morphism determines the structure of an ordinary algebra on the space $V$ with bilinear operations from the space $U$ and a unary linear operation $\partial$. Let us also denote this algebra by~$\widehat V$.

\begin{remark}
The map $\partial\in \End_V(1)$ is a derivation of the algebra $\widehat V$.
\end{remark}

The problem is to find a necessary and sufficient condition for a given differential algebra $A$ to coincide with the algebra $\widehat V$ for some abelian $\Var$-chiral algebra $V$.

It is clear that to solve this problem it suffices to consider the intersection of the kernels of the morphisms $\widehat V$ for all abelian $\Var$-chiral algebras $V$ as an ideal in the free operad $\mathcal F_{\widehat U}$. The quotient by this ideal defines an operad $\widehat {\Var}$ such that all algebras of the form $\widehat V$ lie in the corresponding variety of $\widehat {\Var}$-algebras, i.e., the diagram
\begin{equation}\label{eq:Diag-2}
\begin{CD}
U\otimes \Bbbk_- @>\subset >> \mathcal F_{\widehat U} @>>> \widehat{\Var} @>>> \End_V     \\
@V\cong VV @. @. @AA \widehat{\ \cdot\ }A \\
U @>\subset >> \mathcal F_U @>>> \Var @>>> \ChEnd_V
\end{CD}
\end{equation}
is commutative for any abelian $\Var$-chiral algebra $V$.

The following statement, which is the main result of this paper, provides the answer to the question regarding the structure of the operad $\widehat {\Var}$.

\begin{theorem}\label{thm:MainThm}
Let $\Var$ be a binary quadratic operad. Then, for any abelian $\Var$-chiral algebra $V$, the algebra $\widehat V$ is a differential $\Var\bullet \Com$-algebra. Conversely, any differential $\Var\bullet\Com$-algebra $A$ can be viewed as an abelian $\Var$-chiral algebra.
\end{theorem}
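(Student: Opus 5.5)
The plan is to compute, for an abelian chiral algebra, the quadratic part of compositions $X\circ_1 Y$ and $X\circ_2 Y$ explicitly via formulas \eqref{eq:X-1-Yformula} and \eqref{eq:X-2-Yformula}, and then compare the resulting relations with the description of $\Var\bullet\Com$ given by the morphism $\Phi:\mathcal F_U\to \Lie\otimes\mathcal F_M$ from \eqref{eq:Phi-morphism} with $\mathcal P=\Com$, $\mathcal P^!=\Lie$.

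\textbf{Step 1: degree-3 values in the abelian case.} In an abelian chiral algebra all brackets $(u\oo{(n)}v)_X$, $n\ge0$, vanish, so only the $\cdot_X$-terms survive. From \eqref{eq:X-1-Yformula} and \eqref{eq:X-2-Yformula} I get
\[
(X\circ_1Y)(a,b,c;\omega_3)=X_{\lambda_1+\lambda_2,\lambda_3}((\partial+\lambda_1+\lambda_2)(a\cdot_Yb),c;\omega_2),
\]
\[
(X\circ_2Y)(a,b,c;\omega_3)=X_{\lambda_1,\lambda_2+\lambda_3}((\partial+\lambda_1)a,b\cdot_Yc;\omega_2).
\]
Since $X(\cdot,\cdot;\omega_2)=\widehat X$ is $\lambda$-free, setting $\lambda_1=\lambda_2=0$ gives
\[
\widehat{X\circ_1Y}(a,b,c)=\partial(a\cdot_Yb)\cdot_Xc,\qquad \widehat{X\circ_2Y}(a,b,c)=\partial a\cdot_X(b\cdot_Yc).
\]
Using the $S_3$-action \eqref{eq:ChEnd-SymAct} together with $\omega_3^\sigma=\mathrm{sgn}(\sigma)\omega_3$, every element $T$ of $\Var(3)$ that is a quadratic composition of generators satisfies an identity of the form
\[
\widehat{T}(a,b,c)=\pm\,(\text{some }\partial\text{ placed on one inner argument}),
\]
where the sign is the sign of the permutation.

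The key observation is the identity $\partial x\cdot(y\cdot z)$ versus $\partial(x\cdot y)\cdot z$. I will show that the image of a tree monomial $\mu\circ_i\nu$ composed with $\sigma$ is
\[
\mathrm{sgn}(\sigma)\cdot L\otimes m ,
\]
where $m$ is the corresponding monomial in $\mathcal F_M$ (with $U\otimes\Bbbk_-$ absorbing the sign) and $L\in\Lie(3)$ acts as the operator placing $\partial$. More precisely, I would check that the assignments
\[
[[x_1,x_2],x_3]\mapsto \partial(\text{first two})\text{ placement},\qquad [x_1,[x_2,x_3]]\mapsto \partial\text{ on } x_1,
\]
realize $\Lie(3)$ (two-dimensional) inside differential trilinear operations: $\partial$ applied to the first argument of the outer product. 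Because $\partial$ is a derivation, $\partial(ab)c=(\partial a)bc+a(\partial b)c$. This matches the Jacobi relation $[[x_1,x_2],x_3]=[x_1,[x_2,x_3]]-[x_2,[x_1,x_3]]$. Hence a relation $r\in\ker(\mathcal F_U\to\Var)$, evaluated on $V$, yields $\sum_k \widehat{\Phi(r)}_k$, where $\Phi(r)=\sum e_k\otimes r_k$ with $e_k$ a basis of $\Lie(3)$ and $e_k$ realized by $\partial$ on the $k$-th argument.

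\textbf{Step 2: direct part.} The main obstacle is that $\partial x_k$ are not free. To extract each $r_k=0$ separately, I would use the $H$-module structure more cleverly, keeping $\lambda$'s rather than setting them to zero: the full degree-3 value is a polynomial in $\lambda_1,\lambda_2$ whose linear coefficients isolate the components $r_k$ as ordinary trilinear maps on $V$. Since $0=$ image of $r$ in $\ChEnd_V(3)$ vanishes identically in $\lambda$, each coefficient vanishes, giving $\widehat V\models r_k$, i.e.\ the $\Var\bullet\Com$ relations, by the characterization of the black product via $\Phi$. Checking that the coefficients of $\lambda_1,\lambda_2$ correspond exactly to the basis $e_1,e_2$ of $\Lie(3)$ is the delicate bookkeeping step.

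\textbf{Step 3: converse.} Given a differential $\Var\bullet\Com$-algebra $A$, define $X_\mu(u,v;\omega_2)=\pm\mu(u,v)$ with zero brackets, extend by sesquilinearity \eqref{eq:ChEnd-3/2-lin}. This is well defined, since the sesquilinearity constraints on $\omega_2$ are exactly the derivation property of $\partial$. By Step 2 computations run backwards, the image of each defining relation of $\Var$ in $\ChEnd_A(3)$ is $\sum_k\lambda$-coefficient$\cdot r_k=0$, so this defines a morphism $\Var\to\ChEnd_A$ which is abelian by construction. Alternatively, it is natural to use Proposition~\ref{prop:HadamardEnd} with $\Com$-differential algebras as for Lie, mirroring Example~\ref{exmp:Pois}.
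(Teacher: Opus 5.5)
Your plan is the paper's: abelian form of the compositions, then reading the $\lambda_1$- and $\lambda_2$-coefficients in $L_3\otimes_H V\cong\Bbbk[\lambda_1,\lambda_2]\otimes V$ as the $\Lie(3)$-components of $\Phi(p)$, and for the converse $\lambda$-free operations plus the derivation property. However, the step you defer as ``delicate bookkeeping'' is the actual content of the proof, and the heuristic you offer for it in Step~1 is wrong.

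The sign from $\omega_3^\sigma=\mathrm{sgn}(\sigma)\omega_3$ is not inherited when $\sigma$ moves the third input. In that case $\lambda_3$ appears and must be rewritten as $-\partial-\lambda_1-\lambda_2$ before you set $\lambda_1=\lambda_2=0$; you may not also set $\lambda_3=0$. Combined with the Leibniz rule, this reverses the sign and moves $\partial$ to the outer argument. For instance,
\[
(X\circ_1Y)^{(13)}(a,b,c;\omega_3)=-\big((\partial+\lambda_2+\lambda_3)(c\cdot_Y b)\big)\cdot_X a=(c\cdot_Y b)\cdot_X(\partial+\lambda_1)a ,
\]
whose $\lambda$-free part is $(c\cdot_Y b)\cdot_X\partial a$, not $-\partial(c\cdot_Y b)\cdot_X a$.

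What you need is the following. Take the spanning set $\mu_i\circ_1\mu_j$, $(\mu_i\circ_1\mu_j)^{(13)}$, $(\mu_i\circ_1\mu_j)^{(23)}$ of $\mathcal F_U(3)$. In the abelian case their values on $\omega_3$ are, respectively:
\begin{gather*}
\widehat X_i\big((\partial+\lambda_1+\lambda_2)\widehat X_j(a,b),c\big),\\
(\widehat X_i\circ_1\widehat X_j)^{(13)}\big((\partial+\lambda_1)a,b,c\big),\\
(\widehat X_i\circ_1\widehat X_j)^{(23)}\big(a,(\partial+\lambda_2)b,c\big),
\end{gather*}
all with sign $+$ (this is Propositions~\ref{prop:prop2} and~\ref{prop:prop3}). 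On the operad side, $\Phi(\mu_i\circ_1\mu_j)=L\otimes(\hat\mu_i\circ_1\hat\mu_j)$ with $L=[[x_1,x_2],x_3]$. The Jacobi identity reads $L=L^{(13)}+L^{(23)}$, so for $p$ as in \eqref{eq:F(3)generic} you get $\Phi(p)=L^{(13)}\otimes p_1+L^{(23)}\otimes p_2$, with $p_1,p_2$ as in \eqref{eq:p-1}, \eqref{eq:p-2}. Expanding the first shape by $\partial(u\cdot v)=\partial u\cdot v+u\cdot\partial v$, everything collapses to
\[
p(a,b,c;\omega_3)=p_1\big((\partial+\lambda_1)a,b,c\big)+p_2\big(a,(\partial+\lambda_2)b,c\big).
\]

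This single identity gives both directions. Its $\lambda_1$- and $\lambda_2$-coefficients are $p_1$ and $p_2$, which yields the direct part. Conversely, if $p_1,p_2$ hold in a differential $\Var\bullet\Com$-algebra $A$, the whole right-hand side vanishes. That includes the $\lambda$-free term $p_1(\partial a,b,c)+p_2(a,\partial b,c)$, which your converse (``$\sum_k\lambda$-coefficient$\cdot r_k=0$'') does not address explicitly.

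Finally, the alternative route through Proposition~\ref{prop:HadamardEnd} does not apply as stated. There $\partial$ acts only on the chiral tensor factor, whereas the derivation of $A$ is arbitrary.
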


In other words, the variety $\widehat{\Var}$ coincides with the class of all differential algebras from the variety defined by the operad $\Var\bullet \Com$.

The proof of Theorem~\ref{thm:MainThm} follows from several auxiliary statements. Throughout what follows, $\Var$ is a binary quadratic operad generated by an $S_2$-space $U$. For a given element $f\in U$, we denote $\hat f=f\otimes 1\in \widehat U(2)$.

\begin{proposition}\label{prop:prop2}
Let $V$ be an abelian $\Var$-chiral algebra. Consider arbitrary $f,g\in U$ and their images $X,Y\in \ChEnd_V(2)$. Then
\begin{equation}\label{eq:Comp-1-alpha}
(X\circ_1 Y)_{\lambda_1,\lambda_2,\lambda_3}^{z_1,z_2,z_3}
 (\cdot,\cdot,\cdot;\omega_3) = (\lambda_1+\lambda_2)(\widehat X\circ_1 \widehat Y) + (\widehat X\circ_1 \partial\widehat Y).
\end{equation}
\end{proposition}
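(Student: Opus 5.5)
Since $V$ is abelian, the conformal brackets $(\cdot\oo\sigma\cdot)_X$ and $(\cdot\oo\sigma\cdot)_Y$ vanish identically. Equivalently, all operations $(\cdot\oo{(n)}\cdot)_X$ and $(\cdot\oo{(n)}\cdot)_Y$ with $n\ge 0$ are zero. By the presentation \eqref{eq:ChEnd(2)presentation}, this means
\[
X^{z_1,z_2}_{\lambda_1,\lambda_2}(u,v;\omega_2)=u\cdot_X v=\widehat X(u,v),
\qquad
Y^{z_1,z_2}_{\lambda_1,\lambda_2}(u,v;\omega_2)=\widehat Y(u,v).
\]
The plan is therefore to substitute these into the explicit composition formula \eqref{eq:X-1-Yformula} from Example~\ref{exmp:ChEnd-Ex_1} and simplify.

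In \eqref{eq:X-1-Yformula}, both sums over $n\ge 0$ contain the factor $(a\oo{(n)} b)_Y$, so they vanish. Only the first term survives:
\[
(X\circ_1 Y)^{z_1,z_2,z_3}_{\lambda_1,\lambda_2,\lambda_3}(a,b,c;\omega_3)
=X^{z_2,z_3}_{\lambda_1+\lambda_2,\lambda_3}\bigl((\partial+\lambda_1+\lambda_2)(a\cdot_Y b),c;\omega_2\bigr).
\]
Next, I use that $X$ on $\omega_2$ does not depend on the lower variables, since the integral part vanishes. The scalar $\lambda_1+\lambda_2$ can then be pulled out of the first argument by linearity. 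This gives
\[
(\lambda_1+\lambda_2)\,\widehat X(\widehat Y(a,b),c)+\widehat X(\partial\widehat Y(a,b),c),
\]
which is exactly $(\lambda_1+\lambda_2)(\widehat X\circ_1\widehat Y)+(\widehat X\circ_1\partial\widehat Y)$ evaluated at $(a,b,c)$. Here $\partial\widehat Y$ denotes the operadic composition $\partial\circ\widehat Y$ in $\End_V$.

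The main point needing care is the meaning of the expression as an element of $L_3\otimes_H V$. The variable $\lambda_3$ does not appear in the result. This is consistent, because no $\lambda_3$ is generated. The expression is written in the normal form in which $\lambda_3$ is eliminated via $\lambda_1+\lambda_2+\lambda_3=-\partial$, so no further rewriting is needed.

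I should also double-check one property. Formula \eqref{eq:X-1-Yformula} was derived using the $\mathcal D$-linearity transfer of $\partial_{z_2}$ to $\partial+\lambda_1+\lambda_2$ in the first argument. For an abelian $X$, this transfer is still valid, since it is just the sesquilinearity \eqref{eq:ChEnd-3/2-lin}. So no additional argument is required beyond checking that the $n=-1$ term was correctly isolated: the integral $\int_0^{\partial+\lambda_1+\lambda_2}1\,d\tau$ equals $\partial+\lambda_1+\lambda_2$.
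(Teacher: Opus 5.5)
Your proposal is correct and is essentially the paper's proof: the paper likewise specializes formula \eqref{eq:X-1-Yformula} to the abelian case. There both sums over $n\ge 0$ vanish, and only $((\partial+\lambda_1+\lambda_2)(a\cdot_Y b)\cdot_X c)$ survives. Your extra checks (the $n=-1$ term giving $\partial+\lambda_1+\lambda_2$, and the normal form in $L_3\otimes_H V$ with $\lambda_3$ eliminated) just make explicit what the paper leaves implicit.
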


\begin{proof}
It suffices to apply formula \eqref{eq:X-1-Yformula} under the assumption that the chiral algebra $V$ is abelian:
\[
(X\circ_1 Y)_{\lambda_1,\lambda_2,\lambda_3}^{z_1,z_2,z_3}
 (a,b,c;\omega_3)
 =((\partial+\lambda_1+\lambda_2)(a\cdot_Y b)\cdot_X c),
\]
as required.
\end{proof}

\begin{proposition}\label{prop:prop3}
Under the conditions of Proposition~\ref{prop:prop2}, the following equalities hold:
\begin{gather}\label{eq:Comp-1-beta}
((X\circ_1 Y)^{(13)})_{\lambda_1,\lambda_2,\lambda_3} ^{z_1,z_2,z_3}
 (\cdot,\cdot,\cdot;\omega_3) =
 \lambda_1(\widehat X\circ_1 \widehat Y)^{(13)} +
 ((\widehat X\circ_1 \widehat Y)\circ_1\partial)^{(13)},
 \\
 \label{eq:Comp-1-gamma}
((X\circ_1 Y)^{(23)})_{\lambda_1,\lambda_2,\lambda_3} ^{z_1,z_2,z_3}
 (\cdot,\cdot,\cdot;\omega_3) =
 \lambda_2(\widehat X\circ_1 \widehat Y)^{(23)} +
 ((\widehat X\circ_1 \widehat Y)\circ_2\partial)^{(23)}.
\end{gather}
\end{proposition}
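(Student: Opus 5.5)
The plan is to derive both equalities directly from Proposition~\ref{prop:prop2}, using the definition \eqref{eq:ChEnd-SymAct} of the $S_3$-action on $\ChEnd_V(3)$. Two further ingredients are needed: the rule for eliminating $\lambda_3$ in $\Lalg_3\otimes_H V$, and the fact that $\partial$ is a derivation of every binary operation $\widehat X$, which follows from the $\mathcal D_2$-linearity computation in Section~1. Put $Z=X\circ_1 Y$ and $W=\widehat X\circ_1\widehat Y$, so that $W(a,b,c)=(a\cdot_Y b)\cdot_X c$.

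\emph{Step 1: the transposition $(13)$.} By \eqref{eq:ChEnd-SymAct},
\[
(Z^{(13)})^{z_1,z_2,z_3}_{\lambda_1,\lambda_2,\lambda_3}(a,b,c;\omega_3)=Z^{z_1,z_2,z_3}_{\lambda_3,\lambda_2,\lambda_1}(c,b,a;\omega_3^{(13)}).
\]
A direct check shows $\omega_3^{(13)}=-\omega_3$: the three factors $z_{21},z_{31},z_{32}$ go to $-z_{32},-z_{31},-z_{21}$. Applying \eqref{eq:Comp-1-alpha} with the permuted $\lambda$'s gives
\[
-(\lambda_3+\lambda_2)\,(c\cdot_Y b)\cdot_X a-\bigl(\partial(c\cdot_Y b)\bigr)\cdot_X a .
\]

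\emph{Step 2: eliminating $\lambda_3$.} In $\Lalg_3\otimes_H V$, multiplication by $\lambda_1+\lambda_2+\lambda_3$ equals $-\partial$ acting on $V$, because $h\partial\otimes_H v=h\otimes_H\partial v$. Hence $-(\lambda_3+\lambda_2)=\partial+\lambda_1$, and the expression becomes
\[
\lambda_1\,W(c,b,a)+\partial\bigl((c\cdot_Y b)\cdot_X a\bigr)-\bigl(\partial(c\cdot_Y b)\bigr)\cdot_X a .
\]
Since $\partial$ is a derivation of $\cdot_X$, the last two terms combine to $(c\cdot_Y b)\cdot_X\partial a$. This is the value at $(a,b,c)$ of the right-hand side of \eqref{eq:Comp-1-beta}: the term $\lambda_1 W^{(13)}$ is visible directly, and the other term is $W$ with $\partial$ inserted in the slot occupied by the original first argument $a$, after the permutation $(13)$.

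\emph{Step 3: the transposition $(23)$.} The computation is identical. Here
\[
(Z^{(23)})_{\lambda_1,\lambda_2,\lambda_3}(a,b,c;\omega_3)=Z_{\lambda_1,\lambda_3,\lambda_2}(a,c,b;\omega_3^{(23)}),
\]
and again $\omega_3^{(23)}=-\omega_3$. Using $-(\lambda_1+\lambda_3)=\partial+\lambda_2$ and the derivation property, one obtains
\[
\lambda_2\,(a\cdot_Y c)\cdot_X b+(a\cdot_Y c)\cdot_X\partial b ,
\]
which is \eqref{eq:Comp-1-gamma}.

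The computations themselves are short. The main obstacle is bookkeeping: getting the signs of $\omega_3^{\sigma}$ right, and matching the composition index in $(W\circ_i\partial)^{\sigma}$ with the argument that actually receives $\partial$ under the right action $X^\sigma(v_1,\dots,v_n)=X(v_{\sigma^{-1}(1)},\dots,v_{\sigma^{-1}(n)})$. I would fix these conventions first by expanding both sides explicitly on $(a,b,c)$, and only then compare them.
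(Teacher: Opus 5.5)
Your proposal is correct and follows the paper's proof essentially step for step. You apply \eqref{eq:ChEnd-SymAct} with $\omega_3^{(13)}=-\omega_3$, invoke Proposition~\ref{prop:prop2} with permuted $\lambda$'s, replace $\lambda_3$ by $-\partial-\lambda_1-\lambda_2$, and use that $\partial$ is a derivation of $\cdot_X$; you also write out the $(23)$ case that the paper leaves as ``similar''. Your reading of $((\widehat X\circ_1\widehat Y)\circ_1\partial)^{(13)}$ at $(a,b,c)$ as $(c\cdot_Y b)\cdot_X\partial a$, with $\partial$ acting on $a$, is the same reading the paper uses in its proof and later when computing $q(a,b,c)$.
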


\begin{proof}
According to \eqref{eq:ChEnd-SymAct},
\begin{multline*}
((X\circ_1 Y)^{(13)})_{\lambda_1,\lambda_2,\lambda_3} ^{z_1,z_2,z_3}
(a,b,c; \omega_3)
=
(X\circ_1 Y)_{\lambda_3,\lambda_2,\lambda_1}^{z_1,z_2,z_3}
(c,b,a; \omega_3^{(13)}) \\
=-((\partial +\lambda_2+\lambda_3)(c\cdot_Y b)\cdot_X a)
=-\lambda_2((c\cdot_Y b)\cdot_X a)
-\lambda_3((c\cdot_Y b)\cdot_X a)
-(\partial (c\cdot_Y b)\cdot_X a).
\end{multline*}
Recall that multiplication by $\lambda_3$ is equivalent to the action of $-\partial-\lambda_1-\lambda_2$ by the definition of the operad of chiral endomorphisms, and since $\partial$ is a derivation, we have
\[
((X\circ_1 Y)^{(13)})_{\lambda_1,\lambda_2,\lambda_3} ^{z_1,z_2,z_3}
(a,b,c; \omega_3)
=
\lambda_1((c\cdot_Y b)\cdot_X a) +
((c\cdot_Y b)\cdot_X \partial a)
=
(\widehat X\circ_1 \widehat Y)^{(13)}(\partial a,b,c),
\]
as required to prove \eqref{eq:Comp-1-beta}.
Relation \eqref{eq:Comp-1-gamma} is proved similarly.
\end{proof}

Let us return to the proof of Theorem~\ref{thm:MainThm}.
As above, let $(\mu_j)_{j\in J}$ be a basis of the space $U$.
An arbitrary element $p\in \mathcal F_U(3)$ can be uniquely represented in the form
\begin{equation}\label{eq:F(3)generic}
p = \sum\limits_{i,j\in J}
\alpha_{ij}(\mu_i\circ_1 \mu_j) + \beta_{ij}(\mu_i\circ_1 \mu_j)^{(13)} +
\gamma_{ij}(\mu_i\circ_1 \mu_j)^{(23)},
\end{equation}
where $\alpha_{ij},\beta_{ij},\gamma_{ij}\in \Bbbk $.

Let us compute the image of $p$ under the morphism $\Phi$ given by formula \eqref{eq:Phi-morphism} for $\mathcal P=\Com$, using the notation from Example~\ref{exmp:Pois}.
By definition,
$\Phi(\mu_i) = e^*\otimes \hat\mu_i$.
Consequently,
\begin{equation}\label{eq:3-monom-1}
\Phi(\mu_i\circ_1\mu_j) = (e^*\circ _1 e^*)\otimes (\hat\mu_i \circ _1 \hat\mu_j)
=(e_1-e_2)\otimes (\hat\mu_i \circ _1 \hat\mu_j).
\end{equation}
Hence we obtain
\begin{gather}
    \Phi((\mu_i\circ_1\mu_j)^{(13)}) = e_1\otimes (\hat\mu_i \circ _1 \hat\mu_j)^{(13)},   \label{eq:3-monom-2}\\
\Phi((\mu_i\circ_1\mu_j)^{(23)}) = -e_2\otimes (\hat\mu_i \circ _1 \hat\mu_j)^{(23)}.   \label{eq:3-monom-3}
\end{gather}
Thus, it follows from \eqref{eq:3-monom-1}--\eqref{eq:3-monom-3} that
\[
\Phi(p) = e_1\otimes p_1 - e_2\otimes p_2,
\]
where
\begin{gather}
p_1 = \sum\limits_{i,j\in J}
\alpha_{ij}(\hat\mu_i\circ_1 \hat\mu_j)
+ \beta_{ij}(\hat\mu_i\circ_1 \hat\mu_j)^{(13)}, \label{eq:p-1}\\
p_2 = \sum\limits_{i,j\in J}
\alpha_{ij}(\hat\mu_i\circ_1 \hat\mu_j)+\gamma_{ij}(\hat\mu_i\circ_1 \hat\mu_j)^{(23)}.\label{eq:p-2}
\end{gather}
The element $p$ is an identity on the class of all $\Var$-algebras if and only if $p_1$ and $p_2$ are identities on the class of all $\Var\bullet\Com$-algebras.

Now let $V$ be an abelian $\Var$-chiral algebra.
Denote by $X_j$ the image of the generator $\mu_j$ in $\ChEnd_V(2)$,
then $\widehat X_j$ is the image of $\hat\mu_j$ in $\End_V$, see diagram \eqref{eq:Diag-2}.

If $p\in \mathcal F_U(3)$ of the form \eqref{eq:F(3)generic} is an identity of the variety $\Var$,
then
\[
\sum\limits_{i,j\in J}
\alpha_{ij}(X_i\circ_1 X_j) + \beta_{ij}(X_i\circ_1 X_j)^{(13)} +
\gamma_{ij}(X_i\circ_1 X_j)^{(23)} =0
\]
in $\ChEnd_V(3)$.
On the other hand, from Propositions \ref{prop:prop2} and \ref{prop:prop3}
it follows that
\begin{multline}\label{eq:Splitting-P}
p^{z_1,z_2,z_3}_{\lambda_1,\lambda_2,\lambda_3}
(\cdot,\cdot,\cdot; \omega_3)
=\lambda_1 \bigg(
\sum\limits_{i,j\in J}
 \alpha_{ij}(\widehat X_i\circ_1 \widehat X_j)
+ \beta_{ij} (\widehat X_i\circ_1 \widehat X_j)^{(13)}
\bigg)
\\
+\lambda_2 \bigg (
\sum\limits_{i,j\in J}
\alpha_{ij}(\widehat X_i\circ_1 \widehat X_j)
+
\gamma_{ij} (\widehat X_i\circ_1 \widehat X_j)^{(23)}
\bigg)
\\
+\sum\limits_{i,j\in J}
\alpha_{ij}(\widehat X_i\otimes \partial\widehat X_j)
+ \beta_{ij}((\widehat X_i\circ_1 \widehat X_j)\circ_1\partial )^{(13)}
+ \gamma_{ij}((\widehat X_i\circ_1 \widehat X_j)\circ_2\partial )^{(23)}.
\end{multline}
The coefficients of $\lambda_1$ and $\lambda_2$ coincide with the images of $p_1$ and $p_2$ in $\End_V$, respectively. Consequently, the algebra $\widehat V$ satisfies all the defining identities of the variety of $\Var\bullet\Com$-algebras.

Conversely, let $A$ be a differential $\Var\bullet\Com$-algebra.
The operations in this algebra are defined by bilinear maps $\widehat X_j \in \End_A(2)$.
We define an operad morphism $\mathcal F_U \to \ChEnd_A$ by the rule
\begin{equation}\label{eq:Diff-to-Chiral}
\mu_j\mapsto X_j,\quad (X_j)^{z_1,z_2}_{\lambda_1,\lambda_2}(a,b; \omega_2) = 1\otimes_H \widehat X_j(a,b)
\end{equation}
for all $a,b\in A$.
Consequently, for any identity $p$ of the variety $\Var$, the terms in formula \eqref{eq:Splitting-P} containing $\lambda_1$ and $\lambda_2$ are equal to zero as the images of the functions $p_1,p_2\in \mathcal F_{\widehat U}(3)$.
The remaining terms form the sum
\[
q =
\sum\limits_{i,j\in J}
\alpha_{ij}(\widehat X_i\otimes \partial\widehat X_j)
+ \beta_{ij}((\widehat X_i\circ_1 \widehat X_j)\circ_1\partial )^{(13)}
+ \gamma_{ij}((\widehat X_i\circ_1 \widehat X_j)\circ_2\partial )^{(23)}.
\]
Let us compute $q(a,b,c)$ for $a,b,c\in A$, using the fact that $\partial$ is a derivation on the algebra $A$:
\begin{multline*}
q(a,b,c) =
\sum\limits_{i,j\in J}
\alpha _{ij} \widehat X_i(\partial \widehat X_j(a,b),c)
+
\beta _{ij} \widehat X_i(\widehat X_j(c,b),\partial a)
+
\gamma_{ij} \widehat X_i(\widehat X_j(a,c),\partial b) \\
=
\sum\limits_{i,j\in J}
\alpha _{ij} \widehat X_i(\widehat X_j(\partial a,b),c)
+
\beta _{ij} \widehat X_i(\widehat X_j(c,b),\partial a) \\
+
\sum\limits_{i,j\in J}
\alpha _{ij} \widehat X_i(\partial \widehat X_j(a,\partial b),c)
+
\gamma_{ij} \widehat X_i(\widehat X_j(a,c),\partial b) \\
=
p_1(\partial a,b,c) + p_2(a,\partial b, c) = 0\in A.
\end{multline*}
Consequently, all the defining identities of the variety $\Var$ lie in the kernel of the constructed operad morphism \eqref{eq:Diff-to-Chiral}, i.e., $A$ is a $\Var$-chiral algebra. \qed


\noindent \textsc{Sobolev Institute of Mathematics, Novosibirsk (Russia)}\\
\textit{Email address:} \texttt{i.v.dudin@math.nsc.ru}

\vspace{0.5cm}

\noindent \textsc{Sobolev Institute of Mathematics, Novosibirsk (Russia)}\\
\textit{Email address:} \texttt{pavelsk77@gmail.com}

\end{document}